\newtheorem{thm}{Theorem}
\newtheorem{prop}{Proposition}
\newtheorem{lem}[prop]{Lemma}
\theoremstyle{definition}
\newtheorem*{rem}{Remark}
\newcommand{\leqs}{\leqslant }
\newcommand{\geqs}{\geqslant }
\begin{document}
\title{Upper bounds for $L^q$ norms of Dirichlet polynomials with small $q$}
\author{Winston Heap}
\thanks{Research supported by European Research Council grant no. 670239}
\email{winstonheap@gmail.com}
\maketitle
\begin{abstract}We improve on previous upper bounds for the $q$th norm of the partial sums of the Riemann zeta function on the half line when $0<q\leqs 1$. In particular, we show that the 1--norm is bounded above by $(\log N)^{1/4}(\log\log N)^{1/4}$. 
\end{abstract}

\section{Introduction and statement of results}

The $L^q$ norm of a Dirichlet polynomial $F(s)=\sum_{n\leqs N}a(n)n^{-s}$ is defined as
\[\|F\|_q^q=\lim_{T\to\infty}\frac{1}{T}\int_0^T |F(it)|^qdt\]
for $0<q<\infty$. 
In this note we are interested in the norms of the Dirichlet polynomial 
\[f(s)=\sum_{n\leqs N}\frac{1}{n^{1/2+s}}.\]
The norms of this function share similarities with moments of the Riemann zeta function on the half-line, being sometimes referred to as pseudomoments of the Riemann zeta function \cite{CG, BBSSZ}. In the paper \cite{CG}, Conrey and Gamburd showed that for $k\in\mathbb{N}$ 
\begin{equation}\label{2kth norm}
\|f\|_{2k}^{2k}\sim c_k (\log N)^{k^2}
\end{equation}
as $N\to\infty$ for a specific constant $c_k$. This of course bears a strong resemblance to the Keating--Snaith conjecture for the Riemann zeta function \cite{KS} and in fact, the proof of \eqref{2kth norm} employed random matrix theory.   

For the continuous case $0<q<\infty$, the norms of $f$ were investigated by Bondarenko, the author, and Seip in \cite{BHS}. It was shown that when $q>1$, we have the same order of magnitude as Conrey and Gamburd's result i.e.\, 
\begin{equation}\label{q>1 norms}\|f\|_q\asymp(\log N)^{q/4},\qquad q>1
\end{equation}
and that the lower bound here holds for all $q>0$, that is
\begin{equation*}\|f\|_q\gg(\log N)^{q/4},\qquad q>0.
\end{equation*} 

The situation regarding upper bounds for $q\leqs 1$ is less satisfactory and somewhat more interesting. Here, the results of \cite{BHS} gave
\begin{equation}\label{small norms}
\|f\|_q\ll\begin{cases}(\log N)^{1/4}\log\log N\,\,\,& q=1\\(\log N)^{1/4}\,\,\,& q<1.\end{cases}
\end{equation}  
The proof of these upper bounds relied on estimates for the norm of the partial sum operator $S_N$ which is defined by 
\[S_N\bigg(\sum_{n=1}^\infty a(n)n^{-s}\bigg)=\sum_{n\leqs N}a(n)n^{-s}.\]
On applying a generalisation of M. Riesz' Theorem due to Helson \cite{Helson}, it can be shown that for Dirichlet series $F(s)$ in $\mathscr{H}^q$ we have
\begin{equation}\label{S_N bounds}
\|S_NF\|_q\leqs\begin{cases}A_q\|F\|_q& q>1,\\B_q\|F\|_1\,\,\,& 0<q<1\end{cases}
\end{equation}
for some absolute constants $A_q$, $B_q$ (see Lemma 3 of \cite{BHS} and also 8.7.2 and 8.7.6 of \cite{R}). Here, the constants $A_q$, $B_q$ blow up like $|q-1|^{-1}$ as $q\to 1$. 

We write $f(s)=S_N(P_N(s))$ where $P_N(s)=\prod_{p\leqs N}(1-p^{-1/2-s})^{-1}$ so that $\|f\|_q=\|S_NP_N\|_q$. Since $\|P_N\|_q\asymp (\log N)^{q/4}$ for all $q>0$ (see Proposition \ref{easy prop} below), the bound in \eqref{small norms} for $q<1$ follows immediately by \eqref{S_N bounds}. The bound for the 1-norm follows from observing that $\|f\|_1\leqs \|f\|_q$ with $q=1+1/\log \log N$ and noting that 
\[\|S_NP_N\|_q\leqs A_q \|P_N\|_q\ll (q-1)^{-1}(\log N)^{q/4}\ll (\log N)^{1/4} \log\log N.\]

The bounds in \eqref{S_N bounds} apply to general Dirichlet series. However, in the special and most interesting case where all the coefficients of the Dirichlet series are 1, we can do better. 

\begin{thm}
\label{1 norm thm}
 There exists a constant $C_q>0$ such that for $0< q\leqs 1$, 
\begin{equation}\|f\|_q\leqs(1+o(1))C_q(\log N)^{\alpha_q}(\log\log N)^{\frac{1}{2}-\alpha_q}
\end{equation}
where
\[\alpha_q=\frac{1}{4(2-q)}.\] 
In particular
\begin{equation}\|f\|_1 \ll (\log N)^{1/4}(\log \log N)^{1/4}.
\end{equation}
\end{thm}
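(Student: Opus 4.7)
The plan is to refine the approach of \cite{BHS} by exploiting the specific structure of $f = S_N P_N$ with all coefficients equal to $1$, rather than using only the bound \eqref{S_N bounds} for the partial sum operator applied to a general Dirichlet series. The key additional input is a factorization of the Euler product $P_N = P_Y \cdot Q$, where $P_Y = \prod_{p \leq Y}(1 - p^{-1/2-s})^{-1}$ and $Q = \prod_{Y < p \leq N}(1 - p^{-1/2-s})^{-1}$, at a threshold $Y = Y(q, N)$ to be optimized. By Proposition~\ref{easy prop} together with the independence of Euler factors in the Bohr lift, $\|P_Y\|_r \asymp (\log Y)^{r/4}$ and $\|Q\|_r \asymp (\log N/\log Y)^{r/4}$ in the relevant range of~$r$.

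Using unique factorization, each $n \leq N$ splits as $n = n_1 n_2$ with $n_1$ being $Y$-smooth and $n_2$ being $Y$-rough, giving
\[f(s) = \sum_{\substack{n_1 \leq N \\ n_1 \text{ is $Y$-smooth}}} n_1^{-1/2-s}\,(S_{N/n_1}Q)(s).\]
The strategy would be to pass from this identity to a H\"older-type bound $\|f\|_q \ll \|P_Y\|_{q_1}\|S_N Q\|_{q_2}$ with $1/q_1 + 1/q_2 = 1/q$; the Riesz--Helson inequality would then be applied only to $Q$, where the saving is most pronounced because $\|Q\|_{q_2}$ is considerably smaller than $\|P_N\|_{q_2}$ when $\log Y$ is at least moderately large. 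Combined with $\|S_N Q\|_{q_2} \leq A_{q_2}\|Q\|_{q_2}$ and $A_{q_2} \ll (q_2 - 1)^{-1}$, this would yield
\[\|f\|_q \ll (q_2-1)^{-1}(\log Y)^{q_1/4}(\log N/\log Y)^{q_2/4}.\]

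Optimizing over $Y$, $q_1$, and $q_2$---with $q_2 - 1 \asymp 1/\log\log N$ controlling the blow-up of the Riesz constant, and $\log Y$ chosen as a suitable power of $\log\log N$---should balance the two log-powers and produce $\alpha_q = 1/(4(2-q))$ as the exponent of $\log N$ together with $(\log\log N)^{1/2 - \alpha_q}$ as the residual factor. The main obstacle lies in justifying the pivotal H\"older-type inequality: because $S_N(P_Y\cdot Q)\neq P_Y\cdot S_N Q$ in general, the standard H\"older inequality for products does not apply directly, and the coupling between $n_1$ and $n_2$ forced by the constraint $n_1 n_2 \leq N$ must be handled carefully. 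This is precisely the point at which the positivity of the coefficients of $f$ should enter: the discrepancy $P_Y\cdot S_N Q - f$ has positive coefficients supported on $n_1 n_2 > N$ with $n_2 \leq N$, and one should be able to estimate it directly (e.g.\ in terms of $P_Y(0)$ and a truncated tail of $Q$) rather than paying an extra Riesz factor, which is exactly what the general-coefficient argument of \cite{BHS} cannot avoid.
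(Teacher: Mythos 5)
Your route is genuinely different from the paper's, and it contains two serious gaps. First, the pivotal inequality $\|f\|_q\ll\|P_Y\|_{q_1}\,\|S_NQ\|_{q_2}$ is exactly where the whole difficulty sits, and you do not supply an argument for it. In the degenerate case $Y=N$ (so $Q\equiv1$) and $q_2\to\infty$ it reduces to $\|S_NP_N\|_q\leqs\|P_N\|_q$, which is the open endpoint case of the Riesz--Helson bound \eqref{S_N bounds} and essentially the statement one is trying to prove; so any proof of the pivotal inequality must already contain the main idea. The proposed repair via the discrepancy $D=P_Y\cdot S_NQ-f$ does not work as described: $D$ is not a Dirichlet polynomial (its coefficients are supported on all $n=n_1n_2$ with $n_1$ a $Y$--smooth number, $n_2$ a $Y$--rough number, $n_2\leqs N<n_1n_2$), the natural ``direct'' estimate via positivity is $\|D\|_q\leqs D(0)$, and already the terms with $n_1=2$, $N/2<n_2\leqs N$ give $D(0)\gg\sqrt{N}/\log Y$; positivity of coefficients gives no useful control of $L^q$ norms on the line $\Re s=0$. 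Second, even granting the pivotal inequality, the optimisation does not produce $\alpha_q=1/(4(2-q))$. For $q=1$ the constraint $1/q_1+1/q_2=1$ with $q_2-1\asymp1/\log\log N$ forces $q_1\asymp\log\log N$, so $\|P_Y\|_{q_1}\asymp(\log Y)^{q_1/4}=\exp(\tfrac{q_1}{4}\log\log Y)$ exceeds $(\log N)^{1/4}$ unless $\log Y=O(1)$; with $Y$ bounded one recovers only the old bound $(\log N)^{1/4}\log\log N$ of \cite{BHS}, and no admissible choice of $(Y,q_1,q_2)$ balances to the claimed exponent.

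The paper's mechanism is different in kind. One multiplies $f$ by the mollifier $\prod_{p\leqs Y}(1-p^{-1/2-it})^{1/2}$ (a square root of the reciprocal Euler product, not a factorisation of $P_N$) and applies H\"older in the form $\|f\|_q^q\leqs I(N,Y)^{q/2}J(Y,q/(2-q))^{1-q/2}$, where $I(N,Y)$ is the \emph{second} moment of the mollified polynomial and $J(Y,\beta)$ is the $\beta$th moment of the Euler product. Because one H\"older factor is an honest mean square, it can be written as a divisor-type sum by Carlson's theorem and evaluated asymptotically by Perron's formula and contour shifts (Proposition \ref{not so easy prop}); no partial-sum operator bound is used anywhere, which is how the $(q-1)^{-1}$ blow-up is avoided. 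Crucially $Y$ is taken large, $\log Y=\log N/(5\log\log N)$ --- the opposite of your choice --- and the residual factor $(\log\log N)^{1/2-\alpha_q}$ comes from the terms $\log N/\log Y\asymp\log\log N$ and $\log\log Y$ in \eqref{I bound}, not from any operator-norm loss.
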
 

\begin{rem}The constant $C_q$ can in fact be computed explicitly. We provide an example in equation \eqref{1 constant} below. 
\end{rem}

The proof of the above theorem uses the fact that we expect $f(s)$ to be well approximated by the Euler product $P_N(s)$ in the mean. Under this assumption, we can then mollify $f(s)$ appropriately. The theorem will follow from an application of H\"older's inequality along with the following two propositions. 

\begin{prop}\label{easy prop}Let $\beta \geqs 0$ and let 
\[J(Y,\beta)=\lim_{T\to\infty}\frac{1}{T}\int_0^T \bigg|\prod_{p\leqs Y}\bigg(1-\frac{1}{p^{1/2+it}}\bigg)^{-1}\bigg|^\beta dt.\]
Then \[J(Y,\beta)\sim a_\beta (e^\gamma\log Y)^{\frac{\beta^2}{4}}\]
as $Y\to\infty$ where 
\begin{equation}
\label{c_beta}
a_\beta=\prod_{p}\bigg(1-\frac{1}{p}\bigg)^{\beta^2/4}\sum_{m=0}^\infty \frac{d_{\beta/2}(p^m)^2}{p^m}
\end{equation} 
\end{prop}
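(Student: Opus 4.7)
\emph{Proof plan.} The plan is to use the mean value theorem for Dirichlet series to reduce $J(Y,\beta)$ to a diagonal sum over $Y$-smooth integers, factor it as an Euler product, and then compare against $\prod_{p\leqs Y}(1-1/p)^{-\beta^2/4}$ via Mertens' theorem.

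First I would expand $P_Y(s)^{\beta/2}$ as a Dirichlet series. Since the product over $p\leqs Y$ is finite and each local factor $(1-p^{-1/2-s})^{-\beta/2}$ has an absolutely convergent binomial expansion at $s=it$, multiplying out gives
\[P_Y(s)^{\beta/2}=\sum_{n\in\mc{S}_Y}\frac{d_{\beta/2}(n)}{n^{1/2+s}},\]
where $\mc{S}_Y$ denotes the set of $Y$-smooth positive integers and $d_{\beta/2}$ is the multiplicative function with $d_{\beta/2}(p^m)=\binom{\beta/2+m-1}{m}$. Writing $|P_Y(it)|^\beta=P_Y(it)^{\beta/2}\overline{P_Y(it)^{\beta/2}}$, expanding the two series, and using the orthogonality $\lim_{T\to\infty}T^{-1}\int_0^T(n/m)^{it}dt=\mathbf{1}_{m=n}$ (with the interchange justified by the absolute convergence $\sum_{n\in\mc{S}_Y}d_{\beta/2}(n)n^{-1/2}=\prod_{p\leqs Y}(1-p^{-1/2})^{-\beta/2}<\infty$) then yields
\[J(Y,\beta)=\sum_{n\in\mc{S}_Y}\frac{d_{\beta/2}(n)^2}{n}=\prod_{p\leqs Y}\sum_{m=0}^\infty\frac{d_{\beta/2}(p^m)^2}{p^m}\]
by multiplicativity.

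Next I would analyse the local factor. From $d_{\beta/2}(1)=1$ and $d_{\beta/2}(p)=\beta/2$ one has
\[\sum_{m\geqs 0}\frac{d_{\beta/2}(p^m)^2}{p^m}=1+\frac{\beta^2}{4p}+O_\beta(p^{-2}),\]
which agrees to first order in $1/p$ with $(1-1/p)^{-\beta^2/4}=1+\beta^2/(4p)+O_\beta(p^{-2})$. Hence the infinite product defining $a_\beta$ in \eqref{c_beta} converges absolutely. Factoring
\[J(Y,\beta)=\Bigg(\prod_{p\leqs Y}\bigg(1-\frac{1}{p}\bigg)^{-\beta^2/4}\Bigg)\prod_{p\leqs Y}\bigg(1-\frac{1}{p}\bigg)^{\beta^2/4}\sum_{m\geqs 0}\frac{d_{\beta/2}(p^m)^2}{p^m}\]
and invoking Mertens' theorem $\prod_{p\leqs Y}(1-1/p)^{-1}\sim e^\gamma\log Y$, the first factor gives $(e^\gamma\log Y)^{\beta^2/4}$ while the second tends to $a_\beta$, producing the stated asymptotic.

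The only subtle point is the interchange of the time average with the double sum for $|P_Y(it)|^\beta$, but this is immediate since $P_Y$ has only finitely many Euler factors, so the Dirichlet series for $P_Y^{\beta/2}$ is absolutely convergent on the critical line. Beyond that, the argument is a clean diagonal mean value computation combined with Mertens, with no serious analytic obstacle.
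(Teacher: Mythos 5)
Your proposal is correct and follows essentially the same route as the paper: expand $P_Y^{\beta/2}$ as a Dirichlet series over $Y$-smooth integers, compute the diagonal mean value, factor the resulting Euler product against $\prod_{p\leqs Y}(1-1/p)^{-\beta^2/4}$, and finish with Mertens' theorem. The only cosmetic difference is that the paper invokes Carlson's theorem (or truncation plus Montgomery--Vaughan) for the mean value, whereas you justify the interchange directly by absolute convergence of the finitely many Euler factors, which is a perfectly valid substitute here.
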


\begin{prop}
\label{not so easy prop}
Let $a_\beta$ be as above,
\begin{equation}
\label{Y bounds}
\exp((\log\log N)^2)\leqs Y\leqs \exp\bigg(\frac{\log N}{5\log\log N}\bigg),
\end{equation}
and let
\[I(N,Y)=\lim_{T\to\infty}\frac{1}{T}\int_0^T \bigg|\sum_{n\leqs N}\frac{1}{n^{1/2+it}}\prod_{p\leqs Y}\bigg(1-\frac{1}{p^{1/2+it}}\bigg)^{1/2}\bigg|^2dt.\]
Then
\begin{equation}
\label{I bound}
I(N,Y)=a_1(e^\gamma\log Y)^{1/4}\bigg(\frac{\log N}{e^\gamma\log Y}+\frac{1}{\pi}\log\log Y+O(1)\bigg).
\end{equation}
\end{prop}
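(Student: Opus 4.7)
My plan is to apply Parseval for Dirichlet series to get $I(N,Y) = \sum_k c_k^2/k$, where $c_k$ is the coefficient of $k^{-s}$ in the Dirichlet expansion of $f_N(s)h(s)$: namely $c_k = \sum_{b\mid k,\; b\text{ is }Y\text{-smooth},\; k/b\leqs N}\mu_{1/2}(b)$, with $\mu_{1/2}$ the multiplicative function satisfying $\sum_{j\geqs 0}\mu_{1/2}(p^j)z^j = (1-z)^{1/2}$. Expanding $c_k^2$ as a double divisor sum, swapping the order of summation, and evaluating $\sum_{[b_1,b_2]\mid k,\; k\leqs N\min(b_1,b_2)}1/k$ via $\sum_{n\leqs M}1/n = \log M + \gamma + O(1/M)$ produces
\[I(N,Y) = S\log N + \gamma S - T + \mathrm{error},\]
where $S = \sum \mu_{1/2}(b_1)\mu_{1/2}(b_2)/[b_1,b_2]$ and $T = \sum \mu_{1/2}(b_1)\mu_{1/2}(b_2)\log(\max(b_1,b_2)/(b_1,b_2))/[b_1,b_2]$, both sums over $Y$-smooth pairs. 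By rearranging the local factor at $p$, one verifies the Euler product identity $S_p = (1-1/p)\sum_{j\geqs 0}d_{1/2}(p^j)^2/p^j$ (a consequence of $d_{1/2} = 1\ast\mu_{1/2}$), and combining this with Proposition \ref{easy prop} at $\beta=1$ and Mertens' theorem gives $S\log N \sim a_1(e^\gamma\log Y)^{1/4}\log N/(e^\gamma\log Y)$, the first term in the claim.

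To extract the $\pi^{-1}\log\log Y$ term from $-T$, I would use the decomposition $2\log(\max/\gcd) = \log(b_1/\gcd) + \log(b_2/\gcd) + |\log(b_1/b_2)|$. The two multiplicative pieces contribute $O(S\log Y) = O(a_1(e^\gamma\log Y)^{1/4})$, which fits inside the final $O(1)$. For the non-multiplicative piece I would use the integral representation $|x| = \frac{2}{\pi}\int_0^\infty(1-\cos(tx))/t^2\,dt$ to get
\[\sum \frac{\mu_{1/2}(b_1)\mu_{1/2}(b_2)}{[b_1,b_2]}|\log(b_1/b_2)| = \frac{2}{\pi}\int_0^\infty \frac{S - R(t)}{t^2}\,dt,\]
where $R(t) = \prod_{p\leqs Y}\sum_{\alpha,\beta\geqs 0}\mu_{1/2}(p^\alpha)\mu_{1/2}(p^\beta)p^{it(\alpha-\beta)}/p^{\max(\alpha,\beta)}$ is again an Euler product in $t$.

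The main obstacle is the asymptotic analysis of this integral. Taking logarithms reduces $\log(R(t)/S)$ to $\sum_{p\leqs Y}(1-\cos(t\log p))/p$; combining the standard estimate $\sum_{p\leqs Y}\cos(t\log p)/p = \log|\zeta(1+it)| - \lambda(1) + o(1)$ (with $\lambda(1) = -\sum_p[\log(1-1/p) + 1/p]$) and the classical identity $M + \lambda(1) = \gamma$ yields $R(t)/S \sim e^\gamma|t|\log Y$ in the range $1/(e^\gamma\log Y)\ll|t|\ll 1$. Splitting the integral accordingly, the contribution from $|t|<1/(e^\gamma\log Y)$ is $O(S\log Y)$ by Taylor expansion, the dominant middle range $1/(e^\gamma\log Y)\leqs|t|\leqs 1$ evaluates to $e^\gamma S\log Y \int dt/t \sim e^\gamma S\log Y \log\log Y$, and the tail $|t|>1$ is $O(S\log Y)$ via standard bounds involving $|\zeta(1+it)|^{-1}$. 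Using $S\log Y \sim a_1 e^{-3\gamma/4}(\log Y)^{1/4}$ transforms the resulting $\pi^{-1}e^\gamma S\log Y\log\log Y$ into exactly $\pi^{-1}a_1(e^\gamma\log Y)^{1/4}\log\log Y$. The bounds \eqref{Y bounds} on $Y$ ensure all remaining errors, including the $O(1/N)$ from the initial harmonic asymptotic, fit inside the final $O(1)$.
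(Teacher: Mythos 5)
Your argument is essentially correct, but it reaches the result by a genuinely different route from the paper. The paper stays on the ``Mellin side'': after Carlson's theorem it applies Perron's formula in both variables $n_1,n_2$, factors the resulting Dirichlet series as $A(\underline{s})\zeta(1+s_1+s_2)$ times Euler products, and extracts the main terms as residues at $s_2=0$ (giving $S\log N$ plus the $\tfrac12$) and at $s_2=-s_1$ (whose contribution $I_2(Y)$ produces the $\pi^{-1}\log\log Y$). You instead work entirely on the arithmetic side: the exact evaluation of $\sum_{k\leqs M}1/k$ replaces Perron, the identity $\sum_{a,b}f(a)f(b)/[a,b]=\prod_p(1-1/p)\sum_j (1\ast f)(p^j)^2 p^{-j}$ (which is correct, and ties $S$ to Proposition \ref{easy prop} with $\beta=1$) replaces the residue at $s_2=0$, and the representation $|x|=\tfrac{2}{\pi}\int_0^\infty(1-\cos tx)t^{-2}dt$ replaces the residue at $s_2=-s_1$ together with the contour shift to $\Re(s)=0$. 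The analytic heart is then identical in both treatments: your $R(t)/S$ plays the role of the paper's $|\prod_{p\leqs Y}(1-p^{-1-it})|$, both are compared to $1/|\zeta(1+it)|$ via the prime number theorem, and the $\log\log Y$ and its constant come from $\int_{1/(e^\gamma\log Y)}^{1}dt/t$ exactly as you describe (your use of $M+\lambda(1)=\gamma$ is the real-variable counterpart of the paper's Mertens computation). What your approach buys is the elimination of all contour machinery and of the paper's two-page Perron truncation lemma; what it costs is that the corresponding error analysis resurfaces in two places you pass over quickly. First, the error from $\sum_{k\leqs M}1/k=\log M+\gamma+O(1/M)$, summed over all $Y$-smooth pairs, is not absolutely small termwise; one must write $b_i=ga_i$ with $(a_1,a_2)=1$, use the constraint $\max(a_1,a_2)\leqs N$, and invoke a Rankin-type bound $\Psi(N,Y)\ll N(\log N)^{-5/2+o(1)}$ (this is exactly where the hypothesis $Y\leqs\exp(\log N/5\log\log N)$ is consumed, as you anticipate). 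Second, the bound $O(S\log Y)$ for $|t|<1/(e^\gamma\log Y)$ must be obtained by Taylor expanding $\log R(t)$ through the Euler product; expanding $1-\cos(t\log(b_1/b_2))$ termwise with absolute values gives only $O((\log Y)^{9/4})$, which is far too large, since $\sum|\mu_{1/2}(b_1)\mu_{1/2}(b_2)|/[b_1,b_2]\asymp(\log Y)^{5/4}\gg S$. Neither point is a flaw in the strategy, but both require the care that in the paper is supplied by Lemmas \ref{perron trunc lem} and \ref{s_2 shift}.
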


On taking $Y=\exp(\log N/5\log\log N)$ and applying H\"older's inequality in the form
\[\|f\|_q^q\leqs I(N,Y)^{\frac{q}{2}}J(Y, q/(2-q))^{1-\frac{q}{2}}\]
gives Theorem \ref{1 norm thm}. If we assume that Proposition \ref{not so easy prop} holds in the range 
\[Y\leqs \exp(\log N/B\log\log N)\] for some general $B>0$ then we find that 
\begin{equation}
\label{1 constant}
\|f\|_1\leqs (1+o(1))a_1e^{\gamma/4}\Big(\sqrt{B}e^{-\gamma}+\frac{1}{\pi\sqrt{B}}\Big)^{1/2}(\log N)^{1/4}(\log\log N)^{1/4}.
\end{equation}
We note that this is minimised when $B=e^\gamma/\pi$, and the constant becomes $(4/\pi)^{1/4}a_1$. Our restriction of $B\geqs 5$ could likely be improved. 

One may ask what the true order of $\|f\|_q$ is for $0<q\leqs 1$. It might be reasonable to expect that $\|f\|_q\ll (\log N)^{q/4}$ in the full range of $q$. Certainly, this would be in analogy with the moments of the Riemann zeta function as proved conditionally on the Riemann hypothesis by Harper \cite{H2}, extending the work of Soundarajan \cite{S}. However, in a recent paper of Bondarenko et. \!al \cite{BBSSZ} (Theorem 6.2) it was shown that for $\alpha>1$ and small $q$, the $q$th norm of the Dirichlet polynomial $\sum_{n\leqs N} d_\alpha(n)n^{-1/2-s}$ exceeds the upper bound of $(\log N)^{\alpha^2q/4}$ that one might expect. Thus, the analogy with the Riemann zeta function is not as robust as expected, at least for small $q$. 

Very recently, Harper \cite{H1} has obtained sharp bounds on the $q$th norm of the Dirichlet polynomial $\sum_{n\leqs N}n^{-it}$ when $0\leqs q\leqs 1$. In particular he has shown that 
\[\lim_{T\to\infty}\frac{1}{T}\int_0^T\Big|\sum_{n\leqs N}n^{-it}\Big|dt\asymp \frac{\sqrt{N}}{(\log\log N)^{1/4}},\] and in doing so has proved a conjecture of Helson.  It would be interesting to see if his methods may be applied to our situation in order to answer the question on the true growth of $\| f\|_q$.

\section{Proof of the Propositions}

Throughout we let $S(Y)$ denote the set of $Y$-- smooth numbers, that is, the set of positive integers all of whose prime divisors are less than or equal to $Y$. 
The proof of Proposition \ref{easy prop} is relatively easy so we give that first. 

\begin{proof}[Proof of Proposition \ref{easy prop}] For $\alpha\in\mathbb{R}$ let $d_\alpha(n)$ be the coefficients of $\zeta(s)^\alpha$ so that, in particular, $d_\alpha(p)=\alpha$ for all primes $p$. Then we have
\begin{equation*}
\begin{split}
&\lim_{T\to\infty}\frac{1}{T}\int_0^T \bigg|\prod_{p\leqs Y}\bigg(1-\frac{1}{p^{1/2+it}}\bigg)^{-1}\bigg|^\beta dt
=\lim_{T\to\infty}\frac{1}{T}\int_0^T \bigg|\sum_{n\in S(Y)}\frac{d_{\beta/2}(n)}{n^{1/2+it}}\bigg|^2dt.
\end{split}
\end{equation*}
By Carlson's Theorem \cite{SS},  
\begin{equation*}
\begin{split}
\lim_{T\to\infty}\frac{1}{T}\int_0^T \bigg|\sum_{n\in S(Y)}\frac{d_{\beta/2}(n)}{n^{1/2+it}}\bigg|^2dt=&\sum_{n\in S(Y)} \frac{d_{\beta/2}(n)^2}{n}.
\end{split}
\end{equation*}
Alternatively, one may argue this by first truncating the sum at $n=\log T$, say, and then applying the Montgomery--Vaughan mean value Theorem  (see Lemma 2 of \cite{GHK} for example). 
 
 Now,
 \[\sum_{n\in S(Y)} \frac{d_{\beta/2}(n)^2}{n}=\prod_{p\leqs Y}\sum_{m=0}^\infty\frac{d_{\beta/2}(p^m)^2}{p^m}=C_\beta(Y)\prod_{p\leqs Y}\bigg(1-\frac{1}{p}\bigg)^{-\beta^2/4}\]
where 
\begin{equation*}
\begin{split}
C_\beta(Y)=&\prod_{p\leqs Y}\bigg(1-\frac{1}{p}\bigg)^{\beta^2/4}\sum_{m=0}^\infty \frac{d_{\beta/2}(p^m)^2}{p^m}\\
=&\prod_{p}\bigg(1-\frac{1}{p}\bigg)^{\beta^2/4}\sum_{m=0}^\infty \frac{d_{\beta/2}(p^m)^2}{p^m}\prod_{p>Y}\Big(1+O(p^{-2})\Big)\\
=&a_\beta\bigg(1+O\Big(\frac{1}{Y\log Y}\Big)\bigg)
\end{split}
\end{equation*}
The result now follows by Mertens' Theorem.
\end{proof}

For the proof of Proposition \ref{not so easy prop} we start by noting that
\begin{equation}\label{I sum}
\begin{split}
I(N,Y)
=&\lim_{T\to\infty}\frac{1}{T}\int_0^T \bigg|\sum_{n\leqs N}\frac{1}{n^{1/2+it}}\prod_{p\leqs Y}\bigg(1-\frac{1}{p^{1/2+it}}\bigg)^{1/2}\bigg|^2dt\\
=&\lim_{T\to\infty}\frac{1}{T}\int_0^T \bigg|\sum_{n\leqs N}\frac{1}{n^{1/2+it}}\sum_{m\in S(Y)}\frac{d_{-1/2}(m)}{n^{1/2+it}}\bigg|^2dt\\
=&\sum_{\substack{m_1n_1=m_2n_2\\m_j\in S(Y)\\n_j\leqs N}}\frac{d_{-1/2}(m_1)d_{-1/2}(m_2)}{(m_1n_1m_2n_2)^{1/2}}
\end{split}
\end{equation}
where we have again used Carlson's Theorem in the last line. 
We wish to apply Perron's formula in the variables $n_1$ and $n_2$ of this last sum. Consequently, we first look at the associated Dirichlet series.

\begin{lem}
\label{F lem}
Let
\[F(s_1,s_2,Y)=\sum_{\substack{m_1n_1=m_2n_2\\m_j\in S(Y)}}\frac{d_{-1/2}(m_1)d_{-1/2}(m_2)}{(m_1m_2)^{1/2}n_1^{1/2+s_1}n_2^{1/2+s_2}}.\]
Then
\begin{equation*}
\begin{split}
F(s_1,s_2,Y)=A(\underline{s})\zeta(1+s_1+s_2)\prod_{p\leqs Y}\bigg(1-\frac{1}{p}\bigg)^{-1/4}\bigg(1-\frac{1}{p^{1+s_1}}\bigg)^{1/2}\bigg(1-\frac{1}{p^{1+s_2}}\bigg)^{1/2}
\end{split}
\end{equation*}
where $A(\underline{s})=A(s_1,s_2,Y)$ is some product that is absolutely convergent (in the limit $Y\to\infty$) in the regions $\Re(s_1),\Re(s_2)>-1/4$. 
\end{lem}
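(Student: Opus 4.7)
The plan is to expand $F$ as an Euler product, isolate the $\zeta(1+s_1+s_2)$ pole, and verify that the remaining Euler product converges absolutely on $\{\Re(s_1),\Re(s_2) > -1/4\}$. Reindexing by $N := m_1n_1 = m_2n_2$ and writing $n_j = N/m_j$, the summand becomes
\[
\frac{d_{-1/2}(m_1)d_{-1/2}(m_2)\, m_1^{s_1}m_2^{s_2}}{N^{1+s_1+s_2}},
\]
and the resulting arithmetic function of $N$ is multiplicative, so $F(s_1,s_2,Y) = \prod_p E_p(s_1,s_2)$. For $p > Y$ only $m_1 = m_2 = 1$ is allowed, so $E_p = (1-p^{-1-s_1-s_2})^{-1}$, which is precisely the Euler factor of $\zeta(1+s_1+s_2)$.

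For $p \leqs Y$, let $k$ be the $p$-adic valuation of $N$ and $a_j \in [0,k]$ those of $m_j$. Performing the geometric sum in $k$ and splitting the $(a_1,a_2)$ sum by the sign of $a_1-a_2$, with $r := |a_1-a_2|$, yields
\[
(1-p^{-1-s_1-s_2})E_p(s_1,s_2) = T_p(s_1) + T_p(s_2) - T_p^{(0)},
\]
where
\[
T_p(s) := \sum_{a,r\geqs 0} d_{-1/2}(p^a)d_{-1/2}(p^{a+r})\,p^{-a-r(1+s)},\qquad T_p^{(0)} := \sum_{a\geqs 0} d_{-1/2}(p^a)^2p^{-a}
\]
is the $r=0$ contribution, subtracted to correct for the double count along the diagonal $a_1=a_2$. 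Combining the $p\leqs Y$ and $p>Y$ local factors and absorbing the $p>Y$ pieces of the Euler product of $\zeta(1+s_1+s_2)$ gives
\[
F(s_1,s_2,Y) = \zeta(1+s_1+s_2) \prod_{p\leqs Y} \bigl(T_p(s_1) + T_p(s_2) - T_p^{(0)}\bigr).
\]

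It then remains to verify that
\[
A_p(\underline{s}) := \frac{T_p(s_1) + T_p(s_2) - T_p^{(0)}}{(1-1/p)^{-1/4}(1-p^{-1-s_1})^{1/2}(1-p^{-1-s_2})^{1/2}} = 1 + O(p^{-1-\delta})
\]
uniformly on compact subsets of $\{\Re(s_j) > -1/4\}$. Using the explicit coefficients $d_{-1/2}(p^a) = (-1)^a\binom{1/2}{a}$, a direct Taylor expansion gives $T_p^{(0)} = 1 + \tfrac{1}{4p} + O(p^{-2})$ together with $T_p(s) - T_p^{(0)} = -\tfrac{1}{2}p^{-1-s} + O(p^{-2-s}) + O(p^{-2-2s})$, and these match the three leading terms $1 + \tfrac{1}{4p} - \tfrac{1}{2}(p^{-1-s_1}+p^{-1-s_2})$ of the denominator. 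Every remaining monomial contributing to $A_p - 1$ then has the form $p^{-\alpha - \beta_1 s_1 - \beta_2 s_2}$ with $\alpha\geqs 2$, $\beta_j \geqs 0$ and $\alpha - (\beta_1+\beta_2)/4 > 1$, hence real part strictly less than $-1$ when $\Re(s_j) > -1/4$. Consequently $\sum_p |A_p - 1|$ converges locally uniformly, and $A(\underline{s}) := \prod_{p\leqs Y} A_p$ has an absolutely convergent limit as $Y \to \infty$.

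The main technical point is this last bookkeeping: one must confirm that the leading cancellation is clean, and that the borderline contributions—the cross term $\tfrac14 p^{-2-s_1-s_2}$ from multiplying the two square roots, together with the $r\geqs 2$ terms of $T_p(s_j)$—really satisfy the sharper inequality $\alpha - (\beta_1+\beta_2)/4 > 1$ so as to stay summable throughout the open region. Nothing deeper than this case enumeration is needed.
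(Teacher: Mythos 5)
Your proof is correct and follows essentially the same route as the paper: expand $F$ as an Euler product by multiplicativity, peel off $\zeta(1+s_1+s_2)$ together with the explicit product over $p\le Y$, and check that the remaining local factors are $1+O(p^{-3/2})$ on $\Re(s_j)>-1/4$. Your handling of the local factors via the identity $(1-p^{-1-s_1-s_2})E_p=T_p(s_1)+T_p(s_2)-T_p^{(0)}$ and the weight count $\alpha-(\beta_1+\beta_2)/4>1$ is in fact more explicit than the paper's, which only displays the first few terms of the local factor and asserts absolute convergence ``after expanding everything.''
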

\begin{proof}Since the condition $m_1n_1=m_2n_2$ is multiplicative and so are the coefficients of our sum, we may expand it as an Euler product to see that it is
\begin{equation*}
\begin{split}
&\prod_{p\leqs Y}\sum_{\substack{e_1+f_1=\\e_2+f_2}}\frac{d_{-1/2}(p^{e_1})d_{-1/2}(p^{e_2})}{p^{\frac{1}{2}e_1+\frac{1}{2}e_2+(\frac{1}{2}+s_1)f_1+(\frac{1}{2}+s_2)f_2}}\prod_{p> Y}\sum_{f_1=f_2}\frac{1}{p^{(\frac{1}{2}+s_1)f_1+(\frac{1}{2}+s_2)f_2}}\\
=&\prod_{p\leqs Y}\bigg(1+\frac{1}{4p}-\frac{1}{2p^{1+s_1}}-\frac{1}{2p^{1+s_2}}+\frac{1}{p^{1+s_1+s_2}}+\cdots\bigg)\prod_{p>Y}\bigg(1-\frac{1}{p^{1+s_1+s_2}}\bigg)^{-1}\\
=&A(\underline{s})\zeta(1+s_1+s_2)\prod_{p\leqs Y}\bigg(1-\frac{1}{p}\bigg)^{-1/4}\bigg(1-\frac{1}{p^{1+s_1}}\bigg)^{1/2}\bigg(1-\frac{1}{p^{1+s_2}}\bigg)^{1/2}.
\end{split}
\end{equation*}
Explicitly, we have
\begin{equation}\label{arithmetic factor}
A(\underline{s})=\prod_{p\leqs Y}\frac{(1-p^{-1})^{1/4}(1-p^{-1-s_1-s_2})}{(1-p^{-1-s_1})^{1/2}(1-p^{-1-s_2})^{1/2}}\sum_{\substack{e_1+f_1=\\e_2+f_2}}\frac{d_{-1/2}(p^{e_1})d_{-1/2}(p^{e_2})}{p^{\frac{1}{2}e_1+\frac{1}{2}e_2+(\frac{1}{2}+s_1)f_1+(\frac{1}{2}+s_2)f_2}}
\end{equation}
and, after expanding everything, this is seen to be absolutely convergent (as $Y\to\infty$) in the stated region.  
\end{proof}

As usual, there are some extra technicalities in Perron's formula when $N$ is an integer.  To deal with these, note that the sum in \eqref{I sum} is unaffected if we replace $N$ by the nearest half integer, taking $N+1/2$ if $N\in\mathbb{N}$. Since the asymptotic formula in \eqref{I bound} is also unaffected by such a change, we henceforth assume that $N$ is a half integer.

\begin{lem}\label{perron trunc lem}Let $F(s_1,s_2,Y)$ be as in the above lemma.  
Then for $Y\geqs 2$ and large $T_1\leqs T_2\leqs N$ we have
\begin{multline}\label{I_1 trunc}
I(N,Y)=\frac{1}{(2\pi i)^2}\int_{c_1-iT_1}^{c_1+iT_1}\int_{c_2-iT_2}^{c_2+iT_2}F(s_1,s_2,Y)N^{s_1+s_2}\frac{ds_2}{s_2}\frac{ds_1}{s_1}\\
+O\bigg(\frac{(\log Y)^{2}(\log N)^3}{T_1^{1/2}}\bigg)
\end{multline} 
where $c_1,\,c_2\asymp 1/\log N$. 
\end{lem}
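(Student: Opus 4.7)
The strategy is to apply the effective truncated Perron formula twice, once in each of the summation variables $n_1, n_2$ of the expression \eqref{I sum} for $I(N,Y)$. Since the generating Dirichlet series of the coefficients is precisely $F(s_1, s_2, Y)$ of Lemma \ref{F lem}, the main term will automatically be the stated double contour integral, and the work reduces to controlling two truncation errors.

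First I would apply the standard truncated Perron formula
\[\sum_{n \leqs x} a_n = \frac{1}{2\pi i}\int_{c-iT}^{c+iT}A(s)\frac{x^s}{s}ds + O\bigg(\sum_n \frac{|a_n|(x/n)^c}{1 + T|\log(x/n)|}\bigg),\]
valid for non-integer $x$, to the inner sum over $n_2$ at height $T_2$, treating $n_1$ as a fixed parameter. Because $N$ has been reduced to a half-integer, the troublesome point $n_2 = N$ is avoided. Summing over $n_1 \leqs N$ and applying Perron once more in $n_1$ at height $T_1$ to the resulting Dirichlet polynomial then yields the double integral as main term, together with two error terms: $E_2$ from the $s_2$-truncation (summed against $n_1^{-1/2}$ for $n_1 \leqs N$), and $E_1$ from the $s_1$-truncation (integrated against $N^{s_2}/s_2$ along the $s_2$-contour).

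For $E_2$ I would insert the Perron bound and split the sum over $n_2$ into ranges close to and far from $N$. In the close range one has $|\log(N/n_2)|\gg 1/N$, but the support of the coefficients $c(n_1,n_2) = \sum_{m_j \in S(Y),\, m_1n_1 = m_2n_2} d_{-1/2}(m_1)d_{-1/2}(m_2)/(m_1m_2)^{1/2}$ is constrained by the requirement that the $Y$-rough parts of $n_1$ and $n_2$ coincide; in the far range $|\log(N/n_2)|\gg 1$ and the sum is dominated by the value of the Euler product of $F$ at $s_1 = s_2 = c_2$. Combined with $|d_{-1/2}(m)|\leqs 1$, this yields $|E_2| \ll (\log Y)^{O(1)}(\log N)^{O(1)}/T_2$, which is absorbed into the stated error by the hypothesis $T_1 \leqs T_2$.

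For the dominant error $E_1$, I would apply Cauchy--Schwarz on the $s_2$-contour:
\[|E_1| \ll N^{c_2}\bigg(\int_{-T_2}^{T_2}\frac{dt_2}{c_2^2 + t_2^2}\bigg)^{1/2}\bigg(\int_{-T_2}^{T_2}|R_1(c_2 + it_2)|^2\, dt_2\bigg)^{1/2},\]
where $R_1(s_2)$ denotes the Perron truncation error in the $s_1$-variable. The first factor is $\ll (\log N)^{1/2}$ since $c_2 \asymp 1/\log N$, and the second factor is handled by a Montgomery--Vaughan-type mean-value bound for $R_1(s_2)$, producing the crucial $T_1^{-1/2}$ factor. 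Using the factorisation of $F$ in Lemma \ref{F lem}, the pole of $\zeta(1+s_1+s_2)$ contributes a bounded power of $\log N$ (since $c_1+c_2 \asymp 1/\log N$ keeps both contours safely away from the pole), while the Euler product contributes at most $(\log Y)^{O(1)}$. The main obstacle is this second-moment estimate for $R_1$ together with the careful bookkeeping of $\log N$ and $\log Y$ powers needed to produce exactly $(\log Y)^2(\log N)^3$; the half-power of $T_1$ in the denominator is the characteristic signature of the Cauchy--Schwarz step.
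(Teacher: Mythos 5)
Your overall framework --- apply the truncated Perron formula in each of $n_1$ and $n_2$, identify the main term with the double contour integral of $F$, and reduce to bounding the truncation errors --- is the same as the paper's. But the entire content of the lemma is the error analysis, and there your argument has a genuine gap. The paper obtains the factor $T_1^{-1/2}$ by a purely pointwise estimate of the Perron remainder sum
\[
\sum_{\substack{m_1n_1=m_2n_2\\ m_j\in S(Y)}}\frac{|d_{-1/2}(m_1)d_{-1/2}(m_2)|}{(m_1m_2)^{1/2}n_1^{1/2+c_1}n_2^{1/2+c_2}\max(T_1|\log(N/n_1)|,1)}\,:
\]
it splits the $n_1$-sum at $|\log(N/n_1)|=T_1^{-1/2}$, so that the far terms carry the weight $\max(T_1|\log(N/n_1)|,1)\geqs T_1^{1/2}$ and are controlled by the full divisor-type sum $\ll(\log Y)^2(\log N)^2$, while the near terms $|n_1-N|\leqs ANT_1^{-1/2}$ are controlled by the short-interval logarithmic divisor average $\sum_{N(1-AT_1^{-1/2})\leqs n\leqs N(1+AT_1^{-1/2})}d(n)/n\ll(\log N)T_1^{-1/2}$. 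Your proposal contains neither of these two steps, and they are precisely where the stated power of $T_1$ comes from.

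Instead you attribute the $T_1^{-1/2}$ to a Cauchy--Schwarz step in $t_2$ followed by a ``Montgomery--Vaughan-type mean-value bound'' for the $s_1$-truncation error $R_1(s_2)$. This cannot work as stated: the Perron remainder is an absolute-value upper bound of the form $|R_1(c_2+it_2)|\leqs\sum_{n_1}b_{n_1}^{*}(N/n_1)^{c_1}/\max(T_1|\log(N/n_1)|,1)$ with nonnegative $b_{n_1}^{*}$ essentially independent of $t_2$, so there is no oscillation in $t_2$ for a mean-value theorem to exploit, and $T_1$ does not enter the $t_2$-integration at all; Cauchy--Schwarz on the $s_2$-contour only yields the harmless factor $(\log N)^{1/2}$ and leaves you needing exactly the pointwise bound on the $n_1$-sum that you have not supplied. (The trivial bound $\int_{-T_2}^{T_2}|N^{s_2}/s_2|\,dt_2\ll\log N$ already does everything your Cauchy--Schwarz step would.) Similarly, your treatment of $E_2$ via ``$|\log(N/n_2)|\gg 1/N$ in the close range'' gives $T_2|\log(N/n_2)|\gg T_2/N\leqs 1$ under the hypothesis $T_2\leqs N$, i.e.\ no savings at all term by term; to recover a power of $T_2$ there you must count the $\asymp N/T_2$ near-diagonal terms against their density $\asymp d(n)/N$, which is again the splitting-plus-short-divisor-sum argument. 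Without these estimates the claimed error $O((\log Y)^2(\log N)^3T_1^{-1/2})$ is not established.
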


\begin{proof}

Let $\chi_{[a,b]}(t)$ denote the characteristic function of the interval $[a,b]$. Then for $n\in\mathbb{N}$ 
\[\chi_{[1,N]}(n)=\frac{1}{2\pi i}\int_{c-iT}^{c+iT}\bigg(\frac{N}{n}\bigg)^s\frac{ds}{s}+O\bigg(\frac{(N/n)^c}{\max(T|\log(N/n)|,1)}\bigg)\]
for any $c>0$. See the first Lemma of chapter 17 of Davenport's book \cite{D} for example.

Applying this to the variables $n_1$ and $ n_2$ in formula \eqref{I sum} with the choice $c_1,\,c_2\asymp 1/\log N$ gives
\begin{multline}
\label{perron trunc 1}
I(N,Y)=\sum_{\substack{m_1n_1=m_2n_2\\m_j\in S(Y)}}\frac{d_{-1/2}(m_1)d_{-1/2}(m_2)}{(m_1n_1m_2n_2)^{1/2}}\\
\times\bigg[\frac{1}{2\pi i}\int_{c_1-iT_1}^{c_1+iT_1}\bigg(\frac{N}{n_1}\bigg)^{s_1}\frac{ds_1}{s_1}+O\bigg(\frac{n_1^{-c_1}}{\max(T_1|\log(N/n_1)|,1)}\bigg)\bigg]\\
\times\bigg[\frac{1}{2\pi i}\int_{c_2-iT_2}^{c_2+iT_2}\bigg(\frac{N}{n_2}\bigg)^{s_2}\frac{ds_2}{s_2}+O\bigg(\frac{n_2^{-c_2}}{\max(T_2|\log(N/n_2)|,1)}\bigg)\bigg].
\end{multline}
After swapping the order of summation and integration, the term arising from the product of the two integrals is seen to be the integral in equation \eqref{I_1 trunc}. This swapping is legitimate since by Lemma \ref{F lem} we are in the domain of absolute convergence of $F(s_1,s_2,Y)$. It remains to estimate the error terms.

Let's first consider a term arising from the product of an integral and an error term. Note that the integrals are
\begin{equation*}
\begin{split}
\frac{1}{2\pi i}\int_{c_j-iT_j}^{c_j+iT_j}\bigg(\frac{N}{n_j}\bigg)^{s_j}\frac{ds_j}{s_j}
\ll &n_j^{-c_j}\int_{-T}^T\frac{dt}{|c_j+it|}=n_j^{-c_j}\bigg(\int_{-T}^{-1}+\int_{-1}^1+\int_1^T\bigg)\frac{dt}{|c_j+it|} \\
\ll& n_j^{-c_j}(\log N+\log T)\\
\ll& n_j^{-c_j}\log N.
\end{split}
\end{equation*}
Hence, the product of an integral and an error term is 
\[\ll \log N\sum_{\substack{m_1n_1=m_2n_2\\m_j\in S(Y)}}\frac{|d_{-1/2}(m_1)d_{-1/2}(m_2)|}{(m_1m_2)^{1/2}n_1^{1/2+c_1}n_2^{1/2+c_2}\max(T_1|\log(N/n_1)|,1)}.\] 
We first apply the bound $|d_{-1/2}(m)|\leqs 1$.  

Now, for the terms with $|\log (N/n_1)|\geqs T_1^{-1/2}$ we have
\begin{align}\label{perron error 1}
& \log N\sum_{\substack{m_1n_1=m_2n_2\\m_j\in S(Y)\\|\log (N/n_1)|\geqs T^{-1/2}}}\frac{1}{(m_1m_2)^{1/2}n_1^{1/2+c_1}n_2^{1/2+c_2}\max(T_1|\log(N/n_1)|,1)}\\
&\leqs \frac{\log N}{T_1^{1/2}}\sum_{\substack{m_1n_1=m_2n_2\\m_j\in S(Y)}}\frac{1}{(m_1m_2)^{1/2}n_1^{1/2+c_1}n_2^{1/2+c_2}}\nonumber\\
&= \frac{\log N}{T_1^{1/2}}\sum_{\substack{m_1n_1=l\\m_1\in S(Y)}}\frac{\sum_{m_2n_2=l,m_2\in S(Y)}n_2^{-c_2}}{m_1^{1/2}n_1^{1/2+c_1}l^{1/2}}\nonumber\\
&\leqs \frac{\log N}{T_1^{1/2}}\sum_{\substack{m_1n_1=l\\ m_1\in S(Y)}}\frac{d(l)}{m_1^{1/2}n_1^{1/2+c_1}l^{1/2}}\nonumber\\
&\leqs \frac{\log N}{T_1^{1/2}}\sum_{\substack{m_1,n_1\\ m_1\in S(Y)}}\frac{d(m_1)d(n_1)}{m_1n_1^{1+c_1}}\nonumber\\
&\ll \frac{(\log Y)^2(\log N)^3}{T_1^{1/2}}\nonumber
\end{align}
where we have used the inequality $d(mn)\leqs d(m)d(n)$.

For the terms with $|\log (N/n_1)|\leqs T_1^{-1/2}$ we have $|n_1-N|\leqs ANT_1^{-1/2}$ where $A$ is some positive constant. We write $n_1=N+r$ with $|r|\leqs ANT_1^{-1/2}$. Then
\begin{equation*}
\begin{split}
& \log N\sum_{\substack{m_1n_1=m_2n_2\\m_j\in S(Y)\\|\log (N/n_1)|\leqs T^{-1/2}}}\frac{1}{(m_1m_2)^{1/2}n_1^{1/2+c_1}n_2^{1/2+c_2}\max(T_1|\log(N/n_1)|,1)}\\
&\ll \log N \sum_{|r|\leqs ANT^{-1/2}}\sum_{\substack{m_1(N+r)=m_2n_2\\m_j\in S(Y)}}\frac{1}{(m_1m_2)^{1/2}(N+r)^{1/2+c_1}n_2^{1/2+c_2}}\\
&=\log N\sum_{|r|\leqs ANT^{-1/2}}\sum_{\substack{m_1(N+r)=l\\m_1\in S(Y)}}\frac{\sum_{m_2n_2=l,m_2\in S(Y)}n_2^{-c_2}}{m_1^{1/2}(N+r)^{1/2+c_1}l^{1/2}}\\
&\leqs \log N\sum_{|r|\leqs ANT^{-1/2}}\sum_{\substack{m_1\in S(Y)}}\frac{d(m_1(N+r))}{m_1(N+r)^{1+c_1}}\\
&\ll (\log N)(\log Y)^2 \sum_{|r|\leqs ANT_1^{-1/2}}\frac{d(N+r)}{(N+r)^{1+c_1}}.
\end{split}
\end{equation*}
Now,
\begin{equation*}
\begin{split}
\sum_{|r|\leqs ANT_1^{-1/2}}\frac{d(N+r)}{N+r}=&\sum_{N(1-AT_1^{-1/2})\leqs n\leqs N(1+AT_1^{-1/2})}\frac{d(n)}{n}
\end{split}
\end{equation*}
which is a short logarithmic average, and hence should be small. Indeed, on applying the formula
\[\sum_{n\leqs X}\frac{d(n)}{n}=\frac{1}{2}\log^2 X+c_1\log X +c_0+O(X^{-1/2})\]
we see that 
\begin{equation*}
\begin{split}
\sum_{N(1-AT_1^{-1/2})\leqs n\leqs N(1+AT_1^{-1/2})}\frac{d(n)}{n}\ll \frac{\log N}{T_1^{1/2}}.
\end{split}
\end{equation*}
Applying this gives an error term that is $\ll(\log Y)^2(\log N)^2T_1^{-1/2}$. 
To sum up: a term arising from the product of an integral and an error term in \eqref{perron trunc 1} is
\[\ll \frac{(\log Y)^2(\log N)^3}{T_1^{1/2}}.\]

The product of the two error terms in \eqref{perron trunc 1} is given by the sum
\begin{equation*}
\begin{split}
&\sum_{\substack{m_1n_1=m_2n_2\\m_j\in S(Y)}}\frac{|d_{-1/2}(m_1)d_{-1/2}(m_2)|}{(m_1m_2)^{1/2}n_1^{1/2+c_1}n_2^{1/2+c_2}\max(T_1|\log(N/n_1)|,1)\max(T_2|\log(N/n_2)|,1)}\\
&\leqs\sum_{\substack{m_1n_1=m_2n_2\\m_j\in S(Y)}}\frac{|d_{-1/2}(m_1)d_{-1/2}(m_2)|}{(m_1m_2)^{1/2}n_1^{1/2+c_1}n_2^{1/2+c_2}\max(T_1|\log(N/n_1)|,1)}
\end{split}
\end{equation*}
and as we have already seen, this is $\ll T_1^{-1/2}(\log N)^2(\log Y)^2$. 
\end{proof}

We take 
\[T_1=T=\exp((\log\log N)^2)\]
and $T_2=2T_1$ in Lemma \ref{perron trunc lem} so that the error term in \eqref{I_1 trunc} is $o(1)$. Hence we may concentrate on the integral
\[\frac{1}{(2\pi i)^2}\int_{c_1-iT}^{c_1+iT}\int_{c_2-2iT}^{c_2+2iT}F(s_1,s_2,Y)N^{s_1+s_2}\frac{ds_2}{s_2}\frac{ds_1}{s_1}\]
where we recall that 
\begin{equation*}F(s_1,s_2,Y)=\bigg[\prod_{p\leqs Y}\bigg(1-\frac{1}{p}\bigg)^{-1/4}\bigg]A(\underline{s})\zeta(1+s_1+s_2)\prod_{p\leqs Y}\bigg(1-\frac{1}{p^{1+s_1}}\bigg)^{1/2}\bigg(1-\frac{1}{p^{1+s_2}}\bigg)^{1/2}.
\end{equation*}

We first shift the contour in $s_2$ to the line with real part $-\kappa_2=-1/\log Y$. We pick up two simple poles; one at $s_2=0$ and the other at $s_2=-s_1$. As usual, the horizontal contours will give a negligible contribution. With our choice of $\kappa_2$ and the upper bound $Y\leqs \exp(\log N/(5\log\log N))$ we have $N^{-\kappa_2}\leqs (\log N)^{-5}$ and so the vertical contour on the left will also be seen to be negligible. This is the content of the next lemma. 

\begin{lem}
\label{s_2 shift}
Let $T=\exp((\log\log N)^2)$ and let $Y$ satisfy the bounds in \eqref{Y bounds}. Then
\begin{multline}\label{I_2 first shift}
I(N,Y)=\bigg[\prod_{p\leqs Y}\bigg(1-\frac{1}{p}\bigg)^{-1/4}\bigg]\frac{1}{2\pi i}\int_{c_1-iT}^{c_1+iT}\bigg(G(s_1,Y)+H(s_1,Y)\bigg)N^{s_1}\frac{ds_1}{s_1}\\
+O\big((\log N)^{-3/4}\big)
\end{multline}
where
\begin{equation}\label{G}
G(s_1,Y)=\bigg[\prod_{p\leqs Y}\bigg(1-\frac{1}{p}\bigg)^{1/2}\bigg]A(s_1,0,Y)\zeta(1+s_1)\prod_{p\leqs Y}\bigg(1-\frac{1}{p^{1+s_1}}\bigg)^{1/2}
\end{equation}
and
\begin{equation}\label{H}
H(s_1,Y)=-\frac{N^{-s_1}}{s_1}A(s_1,-s_1,Y)\prod_{p\leqs Y}\bigg(1-\frac{1}{p^{1+s_1}}\bigg)^{1/2}\bigg(1-\frac{1}{p^{1-s_1}}\bigg)^{1/2}
\end{equation}
\end{lem}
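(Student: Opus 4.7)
The plan is to apply Cauchy's theorem to the inner integral in $s_2$, moving the contour from $\Re s_2 = c_2$ leftward to $\Re s_2 = -\kappa_2 = -1/\log Y$ along a rectangle closed by the horizontal segments $\Im s_2 = \pm 2T$. As a function of $s_2$, the integrand $F(s_1,s_2,Y)\,N^{s_2}/s_2$ has exactly two singularities inside the rectangle: a simple pole at $s_2 = 0$ coming from the factor $1/s_2$, and a simple pole at $s_2 = -s_1$ coming from $\zeta(1+s_1+s_2)$. Both lie to the right of the new contour, since $c_1, c_2 \asymp 1/\log N$ while $\kappa_2 = 1/\log Y \geqs (\log\log N)^{-2}$ under the hypothesis \eqref{Y bounds}; no further poles intervene because $A(\underline{s})$ is holomorphic throughout $\Re s_j > -1/4$ by Lemma \ref{F lem}, and the truncated Euler products are entire in $s_2$.

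The residues can be read off directly. At $s_2 = 0$ the residue equals $F(s_1,0,Y)$; pulling out the prefactor $\prod_{p \leqs Y}(1-1/p)^{-1/4}$ and using $(1-1/p^{1+s_2})^{1/2}|_{s_2=0} = (1-1/p)^{1/2}$ identifies what remains with $G(s_1,Y)$ from \eqref{G}. At $s_2 = -s_1$, the residue of $\zeta(1+s_1+s_2)$ equals $1$, so the residue of the inner integrand is $-N^{-s_1}/s_1$ times the value at $s_2 = -s_1$ of the remaining factor of $F$; after extracting the same prefactor, this is precisely $H(s_1,Y)$. Multiplying by the outer $N^{s_1}/s_1$ and integrating over $s_1 \in [c_1 - iT, c_1 + iT]$ produces the main term in \eqref{I_2 first shift}.

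What remains is to bound the horizontal segments and the shifted vertical line by $O((\log N)^{-3/4})$. On $\Im s_2 = \pm 2T$ with $-\kappa_2 \leqs \Re s_2 \leqs c_2$, the factor $1/s_2$ gives $O(1/T)$, $|A(\underline{s})| \ll 1$ by absolute convergence, $|\zeta(1+s_1+s_2)| \ll \log T$, and the truncated Euler products are $\ll (\log Y)^{O(1)}$; the gain $T^{-1} = \exp(-(\log\log N)^2)$ renders these segments easily negligible after the $s_1$-integration. The main obstacle, and the only step where the upper bound in \eqref{Y bounds} is genuinely used, is the shifted line $\Re s_2 = -\kappa_2$: there $|N^{s_2}| = N^{-1/\log Y} \leqs (\log N)^{-5}$. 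To see that the remaining factors grow only polylogarithmically, the key computation is $p^{\kappa_2} \leqs Y^{1/\log Y} = e$ for $p \leqs Y$, which yields $\sum_{p \leqs Y} p^{-1+\kappa_2} \ll \log\log Y$ and hence $\prod_{p \leqs Y}|1-1/p^{1+s_j}|^{1/2} \ll (\log Y)^{O(1)}$ uniformly on the contours; similarly $|\zeta(1+s_1+s_2)| \ll \log Y + \log(|\Im(s_1+s_2)|+1)$ on the shifted line via the standard zero-free region bound, and the prefactor is $\asymp (\log Y)^{1/4}$ by Mertens. Combined with the logarithmic integrals $\int dt_j/|c_j + i t_j|$ and $\int dt_2/|\kappa_2 + i t_2|$, the shifted double integral is $\ll (\log Y)^{C}(\log N)^{-5}(\log\log N)^{O(1)}$ for a moderate constant $C$, which comfortably absorbs into $O((\log N)^{-3/4})$.
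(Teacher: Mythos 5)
Your proposal is correct and follows essentially the same route as the paper: shift the $s_2$-contour to $\Re(s_2)=-1/\log Y$, collect the residues at $s_2=0$ and $s_2=-s_1$ (which give $G$ and $H$ exactly as you compute), kill the horizontal segments with the $1/T=\exp(-(\log\log N)^2)$ decay, and kill the shifted vertical line with $N^{-1/\log Y}\leqs(\log N)^{-5}$ against polylogarithmic growth of the remaining factors, including the $\ll\log Y$ bound for $\zeta(1+s_1+s_2)$ near its pole when $|t_1+t_2|\ll1$. No gaps.
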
  
\begin{proof}We consider the integral over $s_2$ as part of the rectangular contour with vertices $c_2-2iT,\,c_2+2iT,\,-\kappa_2+2iT,\,-\kappa_2-2iT$ with $\kappa_2=1/\log Y$. This contour encloses $-s_1$ since $\kappa_2\gg c_1$. Hence,
\begin{equation*}
\begin{split}
&\frac{1}{(2\pi i)^2}\int_{c_1-iT}^{c_1+iT}\int_{c_2-2iT}^{c_2+2iT}F(s_1,s_2,Y)N^{s_1+s_2}\frac{ds_2}{s_2}\frac{ds_1}{s_1}\\
=&\frac{1}{2\pi i}\int_{c_1-iT}^{c_1+iT}\bigg[\mathrm{Res}_{s_2=0}\Big(F(s_1,s_2,Y)\frac{N^s_2}{s_2}\Big)+\mathrm{Res}_{s_2=-s_1}\Big(F(s_1,s_2,Y)\frac{N^s_2}{s_2}\Big)\\
&-\frac{1}{2\pi i}\Big(\int_{c_2+2iT}^{-\kappa_2+2iT}+\int_{-\kappa_2+2iT}^{-\kappa_2-2iT}+\int_{-\kappa_2-2iT}^{c_2-2iT}\Big)F(s_1,s_2,Y)N^{s_2}\frac{ds_2}{s_2}\bigg]N^{s_1}\frac{ds_1}{s_1}.
\end{split}
\end{equation*}
The residue terms at $s_2=0$ and $s_2=-s_1$ give rise to the terms $G(s_1,Y)$ and $H(s_1,Y)$  respectively. Thus it remains to estimate the last three integrals. 

For $s_2$ on any of these contours we have the bounds
\[A(s_1,s_2,Y)\ll 1\]
and
\begin{equation*}
\begin{split}
\prod_{p\leqs Y}&\bigg(1-\frac{1}{p}\bigg)^{-1/4}\bigg(1-\frac{1}{p^{1+s_1}}\bigg)^{1/2}\bigg(1-\frac{1}{p^{1+s_2}}\bigg)^{1/2}\\
\ll&\prod_{p\leqs Y}\bigg(1-\frac{1}{p}\bigg)^{-1/4}\bigg(1+\frac{1}{p^{1+\sigma_1}}\bigg)^{1/2}\bigg(1+\frac{1}{p^{1+\sigma_2}}\bigg)^{1/2}\\
\ll& (\log Y)^{5/4}
\end{split}
\end{equation*} 
since $\sigma_1\asymp1/\log N\ll1/\log Y$ and $\sigma_2\ll 1/\log Y$. Therefore, on the regions of integration under consideration we have
\begin{equation}
\label{F bound}
F(s_1,s_2,Y)\ll |\zeta(1+s_1+s_2)|(\log Y)^{5/4}.
\end{equation}

Consider the integral involving the upper horizontal section. Applying the above bound for $F$ we see that this integral is
\begin{equation*}
\begin{split}\ll&(\log Y)^{5/4}\int_{-T}^{T}\int_{c_2}^{-\kappa_2}|\zeta(1+c_1+\sigma_2+it_1+2iT)|N^{c_1+\sigma_2}\frac{dt_1}{|c_1+it_1|}\frac{d\sigma_2}{|\sigma_2+2iT|}\\
\ll&\frac{(\log Y)^{5/4}}{T}(\log N+\log T)\max_{\substack{\sigma_2\in[-\kappa_2,c_2]\\t_1\in[-T,T]}}|\zeta(1+c_1+\sigma_2+it_1+2iT)|
\end{split}
\end{equation*}
with this second factor arising from splitting the $t_1$-integral over the regions $t_1\ll 1$ and $1\ll |t_1|\leqs T$. 
Since $t_1+2T\leqs 3T= 3\exp((\log\log N)^2)$ we have
\[1+c_1+\sigma_2\geqs1-\frac{1}{\log Y}\geqs 1-\frac{1}{(\log \log N)^2}\geqs 1-\frac{c}{\log(t_1+2T)}\]
and since $t_1+2T\geqs T\gg 1$ we may apply the usual bounds (see Theorem 5.17 of Titchmarsh \cite{T} for example)  to give
\begin{equation}\label{zeta bound}
\max_{\substack{\sigma_2\in[-\kappa_2,c_2]\\t_1\in[-T,T]}}|\zeta(1+c_1+\sigma_2+it_1+2iT)|\ll (\log 3T)^5.
\end{equation}
A similar bound for the lower horizontal integral applies. 

For the remaining vertical integral we again apply the bound for $F$ given in \eqref{F bound}. We then have something that is  
\begin{equation*}
\begin{split}
\ll (\log Y )^{5/4} \int_{-T}^T\int_{-2T}^{2T} |\zeta(1+c_1-\kappa_2+i(t_1+t_2))|N^{c_1-\kappa_2}\frac{dt_1}{|c_1+it_1|}\frac{dt_2}{|-\kappa_2+it_2|}.
\end{split}
\end{equation*}  
If $|t_1+t_2|\ll 1$ then we may bound the zeta function by $1/|c_1-\kappa_2|\ll\log Y$ and in the other case we may use the bound \eqref{zeta bound}. Therefore the above integral is
\begin{equation*}
\begin{split}
\ll &(\log Y)^{5/4} \max(\log Y, (\log T)^5)(\log N+\log T)(\log Y+\log T) N^{-1/\log Y}\\
\ll&(\log N)^{17/4}(\log N)^{-5}\\
\leqs&(\log N)^{- 3/4}
\end{split}
\end{equation*}
\end{proof}

The integral involving $H(s_1,Y)$ is somewhat trickier so we postpone that till later. As for the integral involving $G(s_1,Y)$, we can  compute it directly with residues to give the following. 

\begin{lem}\label{G lem}Let $G(s,Y)$ be given by \eqref{G}. Then 
\begin{equation}
\frac{1}{2\pi i}\int_{c_1-iT}^{c_1+iT}G(s,Y)N^{s}\frac{ds}{s}=a_1e^{-\gamma}\bigg(\frac{\log N}{\log Y}+\frac{1}{2}\bigg)+O(1/\log Y)
\end{equation}
where $a_1$ is given by \eqref{c_beta}. 
\end{lem}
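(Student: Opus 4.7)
The plan is to shift the contour past the double pole at $s=0$---one $s^{-1}$ from $\zeta(1+s)$ and one from the Perron factor---down to the line $\Re(s)=-\kappa_1$ with $\kappa_1=1/\log Y$, and to argue that the shifted contour contributes $o(1/\log Y)$. It is convenient to write $G(s,Y)=\zeta(1+s)\Phi(s,Y)$, where
\[\Phi(s,Y)=\bigg[\prod_{p\leqs Y}\bigg(1-\frac{1}{p}\bigg)^{1/2}\bigg]A(s,0,Y)\prod_{p\leqs Y}\bigg(1-\frac{1}{p^{1+s}}\bigg)^{1/2}\]
is analytic in a neighbourhood of $s=0$. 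The horizontal segments and the left vertical segment can be bounded exactly as in Lemma \ref{s_2 shift}, using $|\zeta(1+s)|\ll\log(|t|+3)$ in the standard zero-free region, the crude estimate $\prod_{p\leqs Y}|1-p^{-1-s}|^{1/2}\ll(\log Y)^{O(1)}$, and $A(s,0,Y)\ll 1$. Because $N^{-\kappa_1}\leqs(\log N)^{-5}$ by the upper bound on $Y$ in \eqref{Y bounds}, the shifted-contour contribution is comfortably absorbed into $O(1/\log Y)$.

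For the residue I would Laurent-expand at $s=0$: combining $\zeta(1+s)=s^{-1}+\gamma+O(s)$, $N^s=1+s\log N+O(s^2)$, and $\Phi(s,Y)=\Phi(0,Y)+s\,\Phi'(0,Y)+O(s^2)$, the coefficient of $s^{-1}$ in $G(s,Y)N^s/s$ equals
\[\Phi(0,Y)\,(\log N+\gamma)+\Phi'(0,Y).\]

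Evaluating these reduces to arithmetic. At $s=0$ the two finite Euler products in $\Phi$ multiply to $\prod_{p\leqs Y}(1-1/p)$, which by a PNT-quality Mertens estimate equals $e^{-\gamma}/\log Y$ with an error that, given the lower bound $\log Y\geqs(\log\log N)^{2}$ in \eqref{Y bounds}, is sharp enough to survive multiplication by $\log N$ and remain $o(1/\log Y)$. Specialising \eqref{arithmetic factor} at $s_1=s_2=0$ and using the identity $\sum_{e=0}^{k}d_{-1/2}(p^e)=d_{1/2}(p^k)$ (a consequence of $(1-x)^{1/2}/(1-x)=(1-x)^{-1/2}$) collapses $A(0,0,Y)$ to $\prod_{p\leqs Y}(1-1/p)^{1/4}\sum_m d_{1/2}(p^m)^2/p^m$, which the proof of Proposition \ref{easy prop} with $\beta=1$ identifies as $a_1\bigl(1+O(1/(Y\log Y))\bigr)$. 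Hence $\Phi(0,Y)=a_1e^{-\gamma}/\log Y$ up to harmless errors. For $\Phi'(0,Y)$ I would differentiate logarithmically: the finite product contributes $\tfrac12\sum_{p\leqs Y}\log p/(p-1)=\tfrac12\log Y+O(1)$ by Mertens, while $(\log A(s,0,Y))'|_{s=0}=O(1)$ uniformly in $Y$, since each Euler factor of $A$ is $1+O(p^{-2})$ and differentiation in $s$ costs only a $\log p$, preserving absolute convergence. Assembling these, the residue becomes $(a_1e^{-\gamma}/\log Y)\bigl(\log N+\tfrac12\log Y+O(1)\bigr)$, which simplifies to $a_1e^{-\gamma}(\log N/\log Y+1/2)+O(1/\log Y)$, matching the claim. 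The two places that need care are the uniform control of $(\log A)'(0,Y)$ in $Y$ and ensuring the Mertens error is sharp enough that, after being multiplied by $\log N$, it remains $O(1/\log Y)$; both rely only on standard tools.
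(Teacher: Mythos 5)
Your proposal is correct and follows essentially the same route as the paper: shift to $\Re(s)=-1/\log Y$, bound the horizontal and left vertical segments as in Lemma \ref{s_2 shift}, extract the double-pole residue $\Phi(0,Y)(\log N+\gamma)+\Phi'(0,Y)$, and evaluate $A(0,0,Y)=a_1(1+O(1/(Y\log Y)))$ and $\tfrac12\sum_{p\leqs Y}\log p/(p-1)=\tfrac12\log Y+O(1)$. Your explicit attention to the two delicate points --- uniformity of $(\log A)'(0,Y)$ and the need for a PNT-quality Mertens error to survive multiplication by $\log N$ --- is if anything more careful than the paper, which absorbs both silently into its $O(1)$ and its appeal to Mertens' theorem.
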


\begin{proof}
Applying the definition of $G$ we have 
\begin{multline*}
\frac{1}{2\pi i}\int_{c_1-iT}^{c_1+iT}G(s,Y)N^{s}\frac{ds}{s}\\
=\bigg[\prod_{p\leqs Y}\bigg(1-\frac{1}{p}\bigg)^{1/2}\bigg]\frac{1}{2\pi i}\int_{c_1-iT}^{c_1+iT}A(s,0,Y)\zeta(1+s)\prod_{p\leqs Y}\bigg(1-\frac{1}{p^{1+s}}\bigg)^{1/2}N^s\frac{ds}{s}.
\end{multline*}
Again, we shift this contour to the line with real part $-\kappa_1=-1/\log Y$ and encounter a double pole at $s=0$. This gives
\begin{equation*}
\begin{split}
&\frac{1}{2\pi i}\int_{c_1-iT}^{c_1+iT}A(s,0,Y)\zeta(1+s)\prod_{p\leqs Y}\bigg(1-\frac{1}{p^{1+s}}\bigg)^{1/2}N^s\frac{ds}{s}\\
=&\mathrm{Res}_{s=0}\bigg[A(s,0,Y)\zeta(1+s)\prod_{p\leqs Y}\bigg(1-\frac{1}{p^{1+s}}\bigg)^{1/2}\frac{N^s}{s}\bigg]\\
&-\frac{1}{2\pi i}\bigg(\int_{c_1+iT}^{-\kappa_1+iT}+\int_{-\kappa_1+iT}^{-\kappa_1-iT}+\int_{-\kappa_1-iT}^{c_1-iT}\bigg)A(s,0,Y)\zeta(1+s)\prod_{p\leqs Y}\bigg(1-\frac{1}{p^{1+s}}\bigg)^{1/2}N^s\frac{ds}{s}.
\end{split}
\end{equation*} 
Applying the usual bounds, as in Lemma \ref{s_2 shift}, the horizontal contours are 
\[\ll \frac{(\log Y)^{1/2}(\log T)^{5}}{T}\]
and the vertical contour is 
\[\ll N^{-\kappa_1}(\log Y)^{5/2}\ll (\log Y)^{5/2}(\log N)^{-5}\ll (\log N)^{-5/2}.\]
The residue term is given by
\[A(0,0,Y)\prod_{p\leqs Y}\bigg(1-\frac{1}{p}\bigg)^{1/2}\bigg[\log N+\frac{1}{2}\sum_{p\leqs Y}\frac{\log p}{p-1}+\gamma+\frac{A^\prime(0,0,Y)}{A(0,0,Y)}\bigg].\]
Since 
\[\frac{1}{2}\sum_{p\leqs Y}\frac{\log p}{p-1}=\frac{1}{2}\log Y+O(1)\]
we have
\begin{multline*}
\frac{1}{2\pi i}\int_{c_1-iT}^{c_1+iT}G(s,Y)N^{s}\frac{ds}{s}=A(0,0,Y)\prod_{p\leqs Y}\bigg(1-\frac{1}{p}\bigg)\bigg[\log N+\frac{1}{2}\log Y+O(1)\bigg]\\+O\bigg(\prod_{p\leqs Y}\Big(1-\frac{1}{p}\Big)^{1/2}(\log N)^{-5/2}\bigg).
\end{multline*}
The result now follows on applying Mertens' Theorem and the formula
\begin{equation*}
\begin{split}
A(0,0,Y)=&\prod_p\frac{(1-p^{-1})^{1/4}(1-p^{-1})}{(1-p^{-1})^{1/2}(1-p^{-1})^{1/2}}\sum_{\substack{e_1+f_1=\\e_2+f_2}}\frac{d_{-1/2}(p^{e_1})d_{-1/2}(p^{e_2})}{p^{\frac{1}{2}(e_1+e_2+f_1+f_2)}}\\
&\times\prod_{p>Y}\big(1+O(p^{-2})\big)\\
=&a_1\bigg(1+O\Big(\frac{1}{Y\log Y}\Big)\bigg).
\end{split}
\end{equation*}

\end{proof}

By combining Lemmas \ref{s_2 shift} and \ref{G lem} we have the following formula for $I(N,Y)$:
\begin{equation*}
\begin{split}
I(N,Y)=\prod_{p\leqs Y}\bigg(1-\frac{1}{p}\bigg)^{-1/4}\bigg[a_1e^{-\gamma}\bigg(\frac{\log N}{\log Y}+\frac{1}{2}\bigg)+I_2(Y)\bigg]+O(1/(\log Y)^{3/4})
\end{split}
\end{equation*}
where 
\[I_2(Y)=-\frac{1}{2\pi i}\int_{c_1-iT}^{c_1+iT}A(s,-s,Y)\prod_{p\leqs Y}\bigg(1-\frac{1}{p^{1+s}}\bigg)^{1/2}\bigg(1-\frac{1}{p^{1-s}}\bigg)^{1/2}\frac{ds}{s^2}.\]
Thus, in order to prove Proposition \ref{not so easy prop} it suffices to show that $I_2(Y)= a_1\pi^{-1}\log\log Y+O(1)$. 

Note that shifting contours to the far left or right in $I_2(Y)$ will not help since the product over primes gets large in both directions. In any case, the residue at $s=0$ is zero since the integrand is an even function and hence has a zero derivative at $s=0$. Indeed, the integrand is small for $t\leqs 1/\log Y$ and hence the major contribution should occur away from $s=0$. With this in mind, we first shift the contour to the line with $\Re(s)=0$ to exploit some symmetry. Then for $t\geqs1/\log Y$ we may approximate the product over primes by the zeta function, which is allowed since the products are fairly long and we are on the edge of the critical strip. The major contribution is then seen to come from the region $1/\log Y\leqs t\leqs 1$.   

\begin{lem}We have
\[I_2(Y)=\frac{1}{\pi}\int_{1/\log Y}^T A(it,-it,Y)\bigg|\prod_{p\leqs Y}\bigg(1-\frac{1}{p^{1+it}}\bigg)\bigg|\frac{dt}{t^2}+O(1)\]
\end{lem}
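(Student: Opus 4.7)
The plan is to shift the contour defining $I_2(Y)$ from $\Re(s)=c_1$ to the imaginary axis, exploit the evenness of the integrand to fold the integral onto $t\geqs 0$, and then extract the main contribution from the range $t\geqs 1/\log Y$. Write $F(s)=A(s,-s,Y)\prod_{p\leqs Y}(1-p^{-1-s})^{1/2}(1-p^{-1+s})^{1/2}$, so that $I_2(Y)=-\frac{1}{2\pi i}\int_{c_1-iT}^{c_1+iT}F(s)\,ds/s^2$. The symmetry $A(s_1,s_2,Y)=A(s_2,s_1,Y)$ visible in \eqref{arithmetic factor} shows that $F(-s)=F(s)$, so the residue of $F(s)/s^2$ at $s=0$ (which equals $F'(0)$) vanishes. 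Nevertheless, the double pole itself obstructs a direct contour shift to $\Re(s)=0$, as a small semicircular indentation contributes a divergent $\sim F(0)/\varepsilon$ term; this is the main technical obstacle.

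To bypass it, I split $F(s)/s^2=F(0)/s^2+[F(s)-F(0)]/s^2$. The first piece integrates explicitly to $1/(c_1-iT)-1/(c_1+iT)=O(1/T)$, contributing $o(1)$. The second integrand is analytic at $s=0$ since $F(s)-F(0)=O(s^2)$ there, so its contour may be shifted freely. The horizontal pieces connecting $\Re(s)=c_1$ and $\Re(s)=0$ at heights $\pm T$ are bounded using $|A(s,-s,Y)|\ll 1$ for $|\Re(s)|<1/4$ and $|\prod_{p\leqs Y}(1-p^{-1\pm s})^{1/2}|\ll \log Y$, giving a total $\ll c_1(\log Y)/T^2=o(1)$. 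On $\Re(s)=0$, substituting $s=it$ and invoking the evenness of $F$ yields
\[I_2(Y)=\frac{1}{\pi}\int_0^T\frac{F(it)-F(0)}{t^2}\,dt+o(1).\]

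The last step is to split at $t=1/\log Y$ and reassemble:
\[I_2(Y)=\frac{1}{\pi}\int_{1/\log Y}^T\frac{F(it)}{t^2}\,dt+\frac{1}{\pi}\int_0^{1/\log Y}\frac{F(it)-F(0)}{t^2}\,dt-\frac{F(0)}{\pi}\int_{1/\log Y}^T\frac{dt}{t^2}+o(1).\]
The last integral equals $\log Y+O(1/T)$, and combining Lemma \ref{G lem} with Mertens' theorem gives $F(0)=A(0,0,Y)\prod_{p\leqs Y}(1-p^{-1})=a_1 e^{-\gamma}/\log Y+o(1/\log Y)$; hence that term is $-a_1 e^{-\gamma}/\pi+o(1)=O(1)$. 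For the middle integral, Taylor expansion gives $F(it)-F(0)=-(F''(0)/2)\,t^2+O(t^4)$. Since $(\log F)'(0)=0$, we have $F''(0)/F(0)=(\log F)''(0)$, and using $\log F(s)=\log A(s,-s,Y)+\tfrac{1}{2}\sum_{p\leqs Y}\log[1-2p^{-1}\cosh(s\log p)+p^{-2}]$ together with $\sum_{p\leqs Y}(\log p)^2/p=\tfrac{1}{2}(\log Y)^2+O(1)$, one finds $(\log F)''(0)=-\tfrac{1}{2}(\log Y)^2+O(1)$. Hence $|F''(0)|\asymp \log Y$ and the middle integral is $O(|F''(0)|/\log Y)=O(1)$. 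Finally, the identity $(1-p^{-1-it})^{1/2}(1-p^{-1+it})^{1/2}=|1-p^{-1-it}|$ recasts the first integral into the form stated in the lemma.
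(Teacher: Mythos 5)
Your argument is correct and reaches the stated formula by essentially the same route as the paper: shift the contour onto the imaginary axis, use the evenness of $F(s)=A(s,-s,Y)\prod_{p\leqs Y}(1-p^{-1-s})^{1/2}(1-p^{-1+s})^{1/2}$ (equivalently the symmetry $A(s_1,s_2,Y)=A(s_2,s_1,Y)$ visible in \eqref{arithmetic factor}) both to fold the integral onto $t\geqs 0$ and to kill the residue at the double pole, and observe that the range $|s|\leqs 1/\log Y$ contributes only $O(1)$ because $F(0)\asymp 1/\log Y$. The one genuine difference is the device used at the double pole: the paper indents the left edge of the rectangle by a semicircle of radius $1/\log Y$ about $s=0$ and bounds the arc integral by $O(1)$ directly (for which $|F(s)|\ll 1/\log Y$ on the arc suffices), whereas you subtract the principal part $F(0)/s^2$, which integrates explicitly to $O(1/T)$ on the original line, shift the now-regular remainder $[F(s)-F(0)]/s^2$, and reassemble afterwards. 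Your version makes the near-origin analysis more explicit at the cost of computing $(\log F)''(0)\asymp(\log Y)^2$, which the paper's indentation avoids. Two small points to tighten: in the expansion $F(it)-F(0)=-\tfrac{1}{2}F''(0)t^2+O(t^4)$ the implied constant depends on $Y$, so you should either record that $\max_{|u|\leqs 1/\log Y}|F^{(4)}(iu)|\ll(\log Y)^3$ or, more simply, use $|F(it)-F(0)|\leqs\tfrac{1}{2}t^2\max_{|u|\leqs t}|F''(iu)|\ll t^2\log Y$, which already gives $\int_0^{1/\log Y}|F(it)-F(0)|t^{-2}\,dt\ll 1$; and $\sum_{p\leqs Y}(\log p)^2/p=\tfrac{1}{2}(\log Y)^2+O(\log Y)$ rather than $+O(1)$, though this changes nothing in the conclusion $|F''(0)|\asymp\log Y$.
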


\begin{proof}
We first shift the line of integration to $c=2/\log Y$ and incur a term that is $o(1)$ from the horizontal sections. We then consider $I_2(Y)$ as part of the rectangular contour with vertices $c-iT,\,c+iT,\,iT,\,-iT$ whose left edge has a semicircular indentation centered at $s=0$ of radius $1/\log Y$.    

Thus,
\begin{multline*}I_2(Y)=-\frac{1}{2\pi i}\bigg(\int_{-iT}^{-i/\log Y}+\int_{\substack{s=e^{i\theta}/\log Y\\-\frac{\pi}{2}\leqs \theta\leqs \frac{\pi}{2}}}+\int_{i/\log Y}^{iT}\bigg)A(s,-s,Y)\\
\times\prod_{p\leqs Y}\bigg(1-\frac{1}{p^{1+s}}\bigg)^{1/2}\bigg(1-\frac{1}{p^{1-s}}\bigg)^{1/2}\frac{ds}{s^2}+o(1).
\end{multline*}
The integral over the semicircular arc is seen to be $O(1)$ whilst
\begin{equation*}
\begin{split}&-\frac{1}{2\pi i}\int_{-iT}^{-i/\log Y}A(s,-s,Y)\prod_{p\leqs Y}\bigg(1-\frac{1}{p^{1+s}}\bigg)^{1/2}\bigg(1-\frac{1}{p^{1-s}}\bigg)^{1/2}\frac{ds}{s^2}\\
=&-\frac{1}{2\pi}\int_{-T}^{-1/\log Y}A(it,-it,Y)\bigg|\prod_{p\leqs Y}\bigg(1-\frac{1}{p^{1+it}}\bigg)\bigg|\frac{dt}{(it)^2}\\
=&-\frac{1}{2\pi}\int_{1/\log Y}^TA(-it,it,Y)\bigg|\prod_{p\leqs Y}\bigg(1-\frac{1}{p^{1+it}}\bigg)\bigg|\frac{dt}{(it)^2}\\
\end{split}
\end{equation*}
This is equal to the integral in the upper half plane provided that we can show $A(-it,it)=A(it,-it)$. Now, 
\[A(s_1,s_2,Y)=\prod_{p\leqs Y}\frac{(1-p^{-1})^{1/4}(1-p^{-1-s_1-s_2})}{(1-p^{-1-s_1})^{1/2}(1-p^{-1-s_2})^{1/2}}\sum_{\substack{e_1+f_1=\\e_2+f_2}}\frac{d_{-1/2}(p^{e_1})d_{-1/2}(p^{e_2})}{p^{\frac{1}{2}e_1+\frac{1}{2}e_2+(\frac{1}{2}+s_1)f_1+(\frac{1}{2}+s_2)f_2}}\]
and hence
\[A(-it,it,Y)=\prod_{p\leqs Y}\frac{(1-p^{-1})^{1/4}(1-p^{-1})}{|1-p^{-1-it}|}\sum_{\substack{m\geqs 0}}\Big|\sum_{n_1n_2=p^m}d_{-1/2}(n_1)n_2^{it}\Big|^2{p^{-m}}\]
and the result follows. Incidentally, this last formula shows that $A(it,-it,Y)$ is positive.
\end{proof}

\begin{lem}We have
\[I_2(Y)=\frac{a_1}{\pi} \log\log Y+O(1)\]
where $a_1$ is given by \eqref{c_beta}
\end{lem}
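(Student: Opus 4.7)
The plan is to approximate the integrand by $a_1/|\zeta(1+it)|$ via Euler product manipulations, and then exploit the pole of $\zeta$ at $s=1$ to extract the $\log\log Y$ main term.

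Setting $Q(t) := A(it,-it,Y)\big|\prod_{p\leqs Y}(1-p^{-1-it})\big|$, the factorization
\[\prod_{p\leqs Y}(1-p^{-1-it}) = \frac{1}{\zeta(1+it)}\prod_{p>Y}(1-p^{-1-it})^{-1}\]
gives $Q(t) = N(t,Y)/|\zeta(1+it)|$, where $N(t,Y) := A(it,-it,Y)\prod_{p>Y}|1-p^{-1-it}|^{-1}$. The first step is to show
\[N(t,Y) = a_1 + O(t) + O\Big(\frac{1}{t\log Y}\Big)\]
uniformly for $t\in[1/\log Y, 1]$. Using \eqref{arithmetic factor} together with the Taylor expansion $|1-p^{-1-it}| = 1 - p^{-1}\cos(t\log p) + O(p^{-2})$ and the analogous expansion of the local factor of $A(it,-it,Y)$, one finds that the Euler factors of $N(t,Y)$ each take the shape $1 + O(1/p^2)$, so the product converges absolutely as $Y\to\infty$. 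Its value at $t=0$ is $a_1$ by \eqref{c_beta}, and its variation in $t$ is controlled by the elementary estimate $\sum_p (1-\cos(t\log p))/p^2 = O(t)$. The truncation error $\prod_{p>Y}|1-p^{-1-it}|^{-1} = 1 + O(1/(t\log Y))$ follows from partial summation on $\sum_{p>Y}\cos(t\log p)/p$ combined with the prime number theorem.

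Second, the main contribution comes from $t\in[1/\log Y, 1]$. Here the Laurent expansion $\zeta(1+it) = (it)^{-1} + \gamma + O(t)$ yields $|\zeta(1+it)|^{-1} = |t|(1+O(t^2))$, so substituting the approximations for $N(t,Y)$ and $|\zeta(1+it)|^{-1}$ gives
\[\frac{1}{\pi}\int_{1/\log Y}^1 \frac{Q(t)}{t^2}\,dt = \frac{a_1}{\pi}\int_{1/\log Y}^1 \frac{dt}{t} + O(1) = \frac{a_1}{\pi}\log\log Y + O(1),\]
where one verifies that each error term ($O(t)$, $O(1/(t\log Y))$, and $O(t^2)$ from $|\zeta|^{-1}$) contributes at most $O(1)$ to the integral.

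Third, for $t\in[1,T]$ the classical bound $|\zeta(1+it)|^{-1} \ll (\log t)^{O(1)}$ (Titchmarsh \cite{T}) combined with $N(t,Y) = O(1)$ gives $Q(t) \ll (\log t)^{O(1)}$, so $\int_1^T Q(t)t^{-2}\,dt = O(1)$. Summing the contributions completes the proof.

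The main obstacle is making the approximation $N(t,Y) = a_1 + o(1)$ quantitative near the lower endpoint $t=1/\log Y$, where the tail error $O(1/(t\log Y))$ is only $O(1)$ rather than $o(1)$. This is absorbed into the final $O(1)$ because $Q(t)\ll t$ in this boundary layer, so $Q(t)/t^2\ll 1/t$, and this integrates to $O(1)$ over a region of fixed logarithmic length.
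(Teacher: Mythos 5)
Your overall strategy is the same as the paper's: approximate $\prod_{p\leq Y}(1-p^{-1-it})$ by $1/\zeta(1+it)$, use the Laurent expansion $1/|\zeta(1+it)|=t+O(t^2)$ to reduce the integrand to $a_1/t$ on $[1/\log Y,1]$, and show the remaining ranges and error terms contribute $O(1)$. Your treatment of the main range $[1/\log Y,1]$ is correct, including the boundary-layer remark (equivalently, the tail error $O(1/(t\log Y))$ contributes $\ll (\log Y)^{-1}\int_{1/\log Y}^{1}t^{-2}\,dt\ll 1$). One inconsistency in the write-up: the factors of $\prod_{p>Y}|1-p^{-1-it}|^{-1}$ are $1+\cos(t\log p)/p+O(p^{-2})$, so this tail product is \emph{not} absolutely convergent and has no finite value at $t=0$; only $A(it,-it,Y)$ has factors $1+O(p^{-2})$. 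Your subsequent partial-summation treatment of the tail is the correct (and necessary) one, and it is essentially the paper's estimate $\log\zeta(1+it,Y)=\log\zeta(1+it)+\int_1^{\log Y}\frac{Y^{1-\sigma-it}}{1-\sigma-it}\,d\sigma+O((\log Y)^{-(A-1)})$ in different clothing.

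The genuine gap is in the range $t\in[1,T]$. You assert $N(t,Y)=O(1)$ there, but your justification (partial summation against the PNT error term) was only claimed for $t\leq 1$ and does not extend: integrating $x^{-it}$ against $d(\pi(x)-\mathrm{li}(x))$ costs a factor $|t|$, and with $T=\exp((\log\log N)^2)$ while $Y$ may be as small as $\exp((\log\log N)^2)$, the quantity $|t|\,e^{-c\sqrt{\log Y}}$ (even in its Vinogradov-strength form) is unbounded near $t=T$. So the uniform boundedness of the tail product up to height $T$ is not established. The conclusion is easily recovered, and this is exactly how the paper proceeds: for $t\geq\log Y$ use the trivial bound $|\prod_{p\leq Y}(1-p^{-1-it})|\leq\prod_{p\leq Y}(1+p^{-1})\ll\log Y$ together with $A\ll 1$, so that $\int_{\log Y}^{T}Q(t)t^{-2}\,dt\ll(\log Y)\cdot(\log Y)^{-1}=O(1)$; then on $[1,\log Y]$ the zeta-approximation is legitimate (there $|t|\leq\log Y$, so the explicit-formula error is $O((\log Y)^{-A})$ even after the factor $|t|$), giving $Q(t)\ll(\log(2+t))^{O(1)}$ and hence an $O(1)$ contribution. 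With that repair your argument coincides with the paper's.
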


\begin{proof}
We first note that 
\begin{equation}\label{I_2}
\begin{split}
I_2(Y)=&\frac{1}{\pi}\int_{1/\log Y}^TA(it,-it,Y)\bigg|\prod_{p\leqs Y}\bigg(1-\frac{1}{p^{1+it}}\bigg)\bigg|\frac{dt}{t^2}+O(1)\\
=&\frac{1}{\pi}\int_{1/\log Y}^{\log Y}A(it,-it,Y)\bigg|\prod_{p\leqs Y}\bigg(1-\frac{1}{p^{1+it}}\bigg)\bigg|\frac{dt}{t^2}+O(1)
\end{split}
\end{equation}
since $A(it,-it,Y)\ll 1$ and the product over primes is $\ll \log Y$. 
Throughout, we consider the variable $s=\sigma+it$ to be in the range 
\begin{equation}
\label{range}
\sigma\geqs 1,\,\,\,\,\,\qquad1/\log Y\leqs t \leqs \log Y.
\end{equation}
We now approximate the product over primes by the reciprocal of the zeta function. The first few details of this follow Lemma 3.4 of \cite{HT} but we shall present them anyway for clarity.  

Let
\[\zeta(s,Y)=\prod_{p\leqs Y}\bigg(1-\frac{1}{p^s}\bigg)^{-1}.\]
Then, 
\[-\frac{\zeta^{\prime}(s,Y)}{\zeta(s,Y)}=\sum_{p\leqs Y}\sum_{m\geqs 1}\frac{\log p}{p^{ms}}=\sum_{n\leq Y}\frac{\Lambda(n)}{n^s}+\sum_{\substack{p\leqs Y,m>1\\p^m>Y}}\frac{\log p}{p^{ms}}.\]
On splitting this last sum over $p\leqs \sqrt{Y}$ and $\sqrt{Y}<p\leqs Y$ we see that
\[-\frac{\zeta^{\prime}(s,Y)}{\zeta(s,Y)}=\sum_{n\leqs Y}\frac{\Lambda(n)}{n^s}+O(Y^{1/2-\sigma}).\]
 Applying the usual arguments of contour integration along with Vinogradov's zero free region we find that
\[\sum_{n\leqs Y}\frac{\Lambda(n)}{n^s}=-\frac{\zeta^\prime(s)}{\zeta(s)}+\frac{Y^{1-s}}{1-s}+O\big((\log Y)^{-A}\big)\] 
for any positive, fixed $A$.  

Therefore, upon integrating over the horizontal line from $\sigma=1$ to $\sigma=\log Y$ we find that
\begin{equation}\label{prod asymp}
\begin{split}
\log\zeta(&1+it,Y)\\
=&\log\zeta(1+it)+\int_{1}^{{\log Y}}\frac{Y^{1-\sigma-it}}{1-\sigma-it}d\sigma-\log\frac{\zeta(\log Y+it)}{\zeta(\log Y+it,Y)}+O\big((\log Y)^{-(A-1)}\big)\\
=&\log\zeta(1+it)+\int_{1}^{{\log Y}}\frac{Y^{1-\sigma-it}}{1-\sigma-it}d\sigma+O\big((\log Y)^{-(A-1)}\big)
\end{split}
\end{equation}

Now, for the integral in \eqref{I_2} over the range $t\in [1,\log Y]$ we use the bound
\begin{equation}
\label{exp int bound}
\int_{1}^{{\log Y}}\frac{Y^{1-\sigma-it}}{1-\sigma-it}d\sigma\ll \frac{1}{|t|\log Y}\ll \frac{1}{\log Y}
\end{equation}
so that 
\[\log\zeta(1+it,Y)=\log\zeta(1+it)+O(1/\log Y).\]
Hence, upon exponentiating 
\[\bigg|\prod_{p\leqs Y}\bigg(1-\frac{1}{p^{1+it}}\bigg)\bigg|\asymp\frac{1}{|\zeta(1+it)|}.\]
Applying this along with the estimate
\[\frac{1}{|\zeta(1+it)|}\ll \log(2+|t|),\,\,\,\,|t|\geqs 1\]
 gives
\begin{equation*}
\begin{split}
\int_{1}^{\log Y} A(it,-it,Y)\bigg|\prod_{p\leqs Y}\bigg(1-\frac{1}{p^{1+it}}\bigg)\bigg|\frac{dt}{t^2} \ll\int_{1}^{\log Y} \log (2+|t|)\frac{dt}{t^2}
\ll1.
\end{split}
\end{equation*}

Over the range $t\in [1/\log Y,1]$ we use the asymptotic formulae
\[A(it,-it,Y)=A(0,0,Y)+O(t)=a_1+O(t),\]
where $a_1$ is given by \eqref{c_beta}, and
\[\exp\bigg(\Re\int_1^{\log Y}\frac{Y^{1-\sigma-it}}{1-\sigma-it}d\sigma\bigg)=1+O\bigg(\int_1^{\log Y}\frac{Y^{1-\sigma}}{|1-\sigma-it|}d\sigma\bigg),\]
which is valid since by \eqref{exp int bound} the integral in the exponential is $\ll 1$ for $t\geqs 1/\log Y$. We also use the formula
\[\frac{1}{|\zeta(1+it)|}=t+O(t^2).\]
Putting these together via \eqref{prod asymp} gives
\begin{equation*}
\begin{split}
\frac{1}{\pi}&\int_{1/\log Y}^1 A(it,-it,Y)\bigg|\prod_{p\leqs Y}\bigg(1-\frac{1}{p^{1+it}}\bigg)\bigg|\frac{dt}{t^2}\\
=&\frac{1}{\pi}\int_{1/\log Y}^1 A(it,-it,Y)\frac{1}{|\zeta(1+it)|}\exp\bigg(\Re\int_1^{\log Y}\frac{Y^{1-\sigma-it}}{1-\sigma-it}d\sigma\bigg)\\
&\qquad\times\Big(1+O((\log Y)^{-A-1})\Big)\frac{dt}{t^2}\\
=&\frac{a_1}{\pi}\int_{1/\log Y}^1\frac{dt}{t}+O\bigg(\int_{1/\log Y}^1\int_1^{\log Y}\frac{Y^{1-\sigma}}{|1-\sigma-it|}d\sigma\frac{dt}{t}\bigg)+O(1)\\
=&\frac{a_1}{\pi}\log\log Y+O\bigg(\log Y\int_1^{\log Y}{Y^{1-\sigma}}d\sigma\bigg)+O(1)\\
=&\frac{a_1}{\pi}\log\log Y+O(1).
\end{split}
\end{equation*}

\end{proof}

\section{Acknowledgments}

The author would like to thank Andrew Granville and Kristian Seip for their useful comments and suggestions on a draft of this paper.  He would also like to thank Oleksiy Klurman for several discussions on this topic.



\begin{thebibliography}{9}



\bibitem{BBSSZ} A. Bondarenko, O. Brevig, E. Saksman, K. Seip, J. Zhao, \emph{Hardy spaces of Dirichlet series and pseudomoments of the Riemann zeta function}. Preprint available at \href{https://arxiv.org/abs/1701.06842}{arxiv.1701.06842}. 

\bibitem{BHS}A. Bondarenko, W. Heap, K. Seip, \emph{An inequality of Hardy--Littlewood type for Dirichlet polynomials}, J. Number Theory {\bf 150} (2015), 191--205. 



\bibitem{CG} B. Conrey and A. Gamburd, \emph{Pseudomoments of the Riemann zeta-function and pseudomagic squares}, J. Number Theory \textbf{117} (2006), 263--278.


\bibitem{D}H. Davenport, \emph{Multiplicative number theory}, Second Edition, Springer--Verlag, 1980.


\bibitem{GHK} S. Gonek, C. Hughes, J. Keating, \emph{A hybrid Euler--Hadamard product for the Riemann zeta function}, Duke Math. J.  {\bf 136} No. 3 (2007), 507--549.

\bibitem{H1}A. Harper, \emph{Moments of random multiplicative functions, I: Low moments, better than squareroot cancellation, and critical multiplicative chaos}. Preprint available at \href{https://arxiv.org/abs/1703.06654}{arxiv.1703.06654}.

\bibitem{H2} A. Harper, \emph{Sharp conditional bounds for moments of the
Riemann zeta function}. Preprint available at \href{https://arxiv.org/abs/1305.4618}{arXiv.1305.4618}.

\bibitem{Helson} H. Helson, \emph{Conjugate series and a theorem of Paley}, Pacific J. Math. {\bf 8} (1958), 437--446.

\bibitem{HT}A. Hildebrand, G. Tenenbaum, \emph{Integers without large prime factors}, J. Th\'eorie Nombres Bordeaux {\bf 5} no. 2 (1993), 411--484.

\bibitem{KS}J. Keating, N. Snaith, \emph{Random matrix theory and $\zeta(1/2+it)$}, Commun. Math. Phys. {\bf 214}  (2000), 57-- 89.

\bibitem{R}W. Rudin, \emph{Fourier analysis on groups}, Wiley, New York--London, 1962. 

\bibitem{SS} E. Saksman, K. Seip, \emph{Integral means and boundary limits of Dirichlet series}, Bull. London Math. Soc. {\bf 41} no. 3 (2009), 411--422.

\bibitem{S} K. Soundararajan, \emph{Moments of the Riemann zeta function}, Ann. Math. \textbf{170} (2009), 981--993.

\bibitem{T} E. C. Titchmarsh, \emph{The Theory of the Riemann Zeta-Function}, Second Edition, The Clarendon Press, Oxford University Press, 1986.
\end{thebibliography}
\end{document}